\newcommand{\nc}{\newcommand}
\numberwithin{equation}{section}
\newtheorem{thm}{Theorem}[section]
\newtheorem{prop}[thm]{Proposition}
\newtheorem{lem}[thm]{Lemma}
\newtheorem{cor}[thm]{Corollary}
\theoremstyle{remark}
\newtheorem{rem}[thm]{Remark}
\newtheorem{nota}[thm]{Notation}
\newtheorem{conj}[thm]{Conjecture}
\nc{\gl}{\mathfrak{gl}}
\nc{\GL}{\mathfrak{GL}}
\nc{\g}{\mathfrak{g}}
\nc{\frk}{\mathfrak{k}}
\nc{\gh}{\widehat\g}
\nc{\h}{\mathfrak{h}}
\nc{\la}{\lambda}
\nc{\C}{\mathbb C }
\nc{\Z}{\mathbb Z }
\nc{\N}{\mathbb N }
\nc{\R}{\mathbb R }
\nc{\Q}{\mathbb Q }
\nc{\VV}{\mathbb V }
\nc{\VVV}{{\mathbb V\hskip -7pt \VV}} %\hbox{\rm V}}}
\nc{\Y}{\widehat Y}
\nc{\al}{\alpha }
\nc{\om}{\omega}
\nc{\ta}{\theta}
\nc{\ve}{\varepsilon}
\nc{\ch}{{\mathop {\rm ch}}}
\nc{\Tr}{{\mathop {\rm Tr}\,}}
\nc{\Id}{{\mathop {\rm Id}}}
\nc{\ad}{{\mathop {\rm ad}}}
\nc{\eins}{{1\hskip -3.5pt \hbox{I}}}
\nc{\bra}{\langle}
\nc{\ket}{\rangle}
\nc{\x}{{\bf x}}
\nc{\pa}{\partial}
\nc{\ld}{\ldots}
\nc{\cd}{\cdots}
\nc{\chara}{{\mathop {\rm char}\,}}
\nc{\sign}{{\mathop {\rm sign}}}
\nc{\Lie}{{\mathop {\rm Lie}}}
\nc{\res}{{\mathop {\rm res}}}
\nc{\trdg}{{\mathop {\rm trdeg}\,}}
\nc{\diag}{{\mathop {\rm diag}}}
\nc{\hk}{\hookrightarrow}
\nc{\T}{\otimes}
\newcommand{\bea}{\begin{equation}}
\newcommand{\ena}{\end{equation}}
\newcommand{\be}{\begin{equation*}}
\newcommand{\en}{\end{equation*}}
\nc{\gr}{\mathrm{gr}}
\nc{\ov}{\overline}
\nc{\cO}{\mathcal O}
\nc{\msl}{\mathfrak{sl}}
\nc{\msp}{\mathfrak{sp}}
\nc{\mso}{\mathfrak{so}}
\nc{\mgl}{\mathfrak{gl}}
\nc{\U}{\mathrm U}
\nc{\V}{\EuScript V}
\newcommand{\bc}{{\mathbb C}}
\newcommand{\bn}{{\mathbb N}}
\newcommand{\fp}{{\mathfrak p}}
\newcommand{\fg}{{\mathfrak g}}
\newcommand{\ft}{{\mathfrak t}}
\newcommand{\fn}{{\mathfrak n}}
\newcommand{\fl}{{\mathfrak l}}
\begin{document}

\title[Zhu's algebra and the $C_2$-algebra]
{Zhu's algebra and the $C_2$-algebra in the symplectic and the orthogonal cases}

\author{Evgeny Feigin and Peter Littelmann}
\address{Evgeny Feigin:\newline
Tamm Theory Division,
Lebedev Physics Institute,\newline
Leninisky prospect, 53,
119991, Moscow, Russia,\newline
{\it and }\newline
French-Russian Poncelet Laboratory, Independent University of Moscow
}
\email{evgfeig@gmail.com}
\address{Peter Littelmann:\newline
Mathematisches Institut, Universit\"at zu K\"oln,\newline
Weyertal 86-90, D-50931 K\"oln,Germany
}
\email{littelma@math.uni-koeln.de}

\begin{abstract}
We prove that Zhu's algebra and the $C_2$-algebra of type ${\tt C}_m$
have the same dimension, and we compute the graded character
of the latter. Maximal parabolic subalgebras of the symplectic algebra
play a central role in our construction. For the orthogonal algebras
our methods do not allow to describe the whole $C_2$-algebras, 
we get only a description of a certain quotient of the algebra.
\end{abstract}

\maketitle

\section*{Introduction}
In this paper we continue the study of Zhu's algebras and $C_2$-algebras of WZW models,
initiated in \cite{FFL}. We briefly recall the setup.

The theory of vertex operator algebras plays a key role in the mathematical
description of the structures arising in the conformal field theories
(see \cite{GabGod}, \cite{BF},\cite{K2}). In particular spaces of states, partition functions
and amplitudes can be described via the structure theory of vertex operator
algebras and their representation theory.

The representation theory of a vertex operator algebra $\V$ is in general very complicated.
But in some special cases (so-called rational VOAs) it is controlled by
a certain finite-dimensional semisimple associative algebra $A(\V)$, called Zhu's algebra
(see \cite{Z}, \cite{FZ}).
More precisely, irreducible representations of $A(\V)$ are in one-to-one correspondence
with irreducible representations of $\V$.
Zhu's algebra can be explicitly computed in some special cases (for example for
minimal models and WZW models). In this paper we only deal with the WZW
models associated with a simple Lie algebra $\g$ on the non-negative integer
level $k$. The corresponding Zhu's algebra (denoted by $A(\g;k)$)
is given by (see \cite{FZ})
$$
A(\g;k)=\U(\g)/\langle e_\theta^{k+1}\rangle,
$$
where $\theta$ denotes the highest root, $e_\theta\in\g$ is a corresponding root vector,
and $\langle e_\theta^{k+1}\rangle$ is the two sided ideal in the universal enveloping
algebra of $\g$ generated by $e_\theta^{k+1}$.

As mentioned above, the rationality is a very important characterization of a VOA.
There exists a condition (called the $C_2$-cofiniteness condition,
see \cite{Z}, \cite{CF},\cite{M1},\cite{M2}) which guaranties
the rationality. Namely,  the $C_2$-algebra $A_{[2]}(\V)$
attached to $\V$ is defined as a quotient of $\V$ by the linear span of the elements
of the form $a_{n}b$, $n\le -2$, with $a,b\in \V$ and $a_n$ being Fourier
modes of the field corresponding to $a$.  The $C_2$-cofiniteness condition reads as
$\dim A_{[2]}(\V)<\infty$. 
The algebras $A(\V)$ and $A_{[2]}(\V)$ are very closely related (see \cite{GabGod},
\cite{GG},\cite{GN},\cite{N}).
For WZW models one has
$$
A_{[2]}(\g;k)=S^\bullet(\g)/\langle \U(\g)\circ e_\theta^{k+1}\rangle,
$$
where $S^\bullet(\g)$ denotes the symmetric algebra of $\g$,
``$\,\circ\,$''denotes the action of $\g$ on $S^\bullet(\g)$ induced by the adjoint action,
$\U(\g)\circ e_\theta^{k+1}\subset S^\bullet(\g)$ is the (irreducible) $\g$-module
generated by $e_\theta^{k+1}\in S^{k+1}(\g)$, and $\langle \U(\g)\circ e_\theta^{k+1}\rangle$
is the ideal generated by this subspace.

It turned out that in several cases Zhu's algebra and the $C_2$-algebra have the 
same dimension
(see \cite{GG}, \cite{FFL}).
We note, however, that in general $A_{[2]}(\V)$ can be viewed as a
degeneration of $A(\V)$ and thus may be bigger (this happens for example
in the case of the WZW model of type $E_8$ with $k=1$). We note also
that the $C_2$-algebra has an extra grading missing in Zhu's algebra.
For example, for $A_{[2]}(\g;k)$ this grading is inherited from the
degree grading on $S^\bullet(\g)$. It is therefore natural
to ask for the graded dimension (or graded character) of the $C_2$-algebra.

In \cite{GG} Gaberdiel and Gannon conjectured an explicit formula
for the graded character of $A_{[2]}(\g;k)$ for $\g=\msl_n$.
They also conjectured that for all $n$ and $k$ one has
$\dim A(\msl_n;k)=\dim A_{[2]}(\msl_n;k)$. These conjectures were proved in
\cite{FFL}.
In general, we have the following conjecture:
\begin{conj}\label{conj}
For all classical Lie algebras and all $k\ge 0$ one has
$$
\dim A(\g;k)=\dim A_{[2]}(\g;k).
$$
\end{conj}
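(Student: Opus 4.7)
The plan is to establish the conjecture type by type, exploiting the fact that the inequality $\dim A_{[2]}(\g;k)\ge\dim A(\g;k)$ is automatic. Indeed, the PBW filtration on $A(\g;k)=\U(\g)/\langle e_\theta^{k+1}\rangle$ yields a natural surjection $A_{[2]}(\g;k)\twoheadrightarrow\gr A(\g;k)$, since the symbol of $e_\theta^{k+1}$ lies in $S^{k+1}(\g)$ and the associated graded of the two-sided ideal it generates is contained in $\langle\U(\g)\circ e_\theta^{k+1}\rangle$. The conjecture therefore reduces to an upper bound on $\dim A_{[2]}(\g;k)$ matching $\dim A(\g;k)=\sum_\lambda(\dim V_\lambda)^2$, where the sum runs over dominant integral weights with $\langle\lambda,\theta^\vee\rangle\le k$, a formula which follows from semisimplicity of Zhu's algebra for rational VOAs.

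For $\g=\msp_{2m}$, I would work with the maximal parabolic $\fp=\fl\oplus\fn$ whose Levi part is $\fl\cong\mgl_m$ and whose abelian nilradical satisfies $\fn\cong S^2(\C^m)$. The highest root vector $e_\theta$ is the $\fl$-highest weight vector in $\fn$, and the triangular decomposition $\g=\fn^-\oplus\fl\oplus\fn$ induces an isomorphism $S^\bullet(\g)\cong S^\bullet(\fn^-)\otimes S^\bullet(\fl)\otimes S^\bullet(\fn)$. The goal is then to produce a monomial spanning set of $A_{[2]}(\msp_{2m};k)$ adapted to this tensor decomposition, count it, and compare with the formula above.

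The heart of the argument is the computation inside $S^\bullet(\fn)=S^\bullet(S^2(\C^m))$. Being the top power of an $\fl$-highest weight vector, $e_\theta^{k+1}$ generates under $\fl$ a particularly transparent irreducible submodule; moreover the full $\gl_m$-decomposition of $S^\bullet(S^2(\C^m))$ is classical, indexed by even-row partitions. I would use this to identify the $\fl$-isotypic pieces of $\langle\U(\g)\circ e_\theta^{k+1}\rangle\cap S^\bullet(\fn)$, then propagate the relations to the $\fl$- and $\fn^-$-factors using the $\g$-invariance of the ideal, and reduce the enumeration of monomials in $A_{[2]}(\msp_{2m};k)$ to a partition-theoretic identity matching $\sum_\lambda(\dim V_\lambda)^2$.

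The principal obstacle I anticipate is controlling the interaction between the three tensor factors: the ideal generator lies in the pure $\fn$-part, but the $\U(\g)$-action spreads it throughout $S^\bullet(\g)$, and ``straightening'' an arbitrary element into normal form requires either an explicit PBW-style monomial basis with a well-behaved leading-term ordering or a carefully chosen filtration reducing to the purely $\fn$-case. For the orthogonal algebras, the analogous parabolic has nilradical $\Lambda^2(\C^n)$; the $\gl_n$-decomposition of $S^\bullet(\Lambda^2(\C^n))$ is richer (even-column partitions) and $e_\theta$ is no longer quite as clean an $\fl$-highest weight vector, which presumably explains why this approach yields only the quotient mentioned in the abstract rather than a full description.
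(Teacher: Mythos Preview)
Your proposal diverges from the paper's argument at the very first structural choice, and the divergence is where the real content lies. You work with the Siegel parabolic \emph{inside} $\msp_{2m}$ (Levi $\mgl_m$, nilradical $S^2(\C^m)$), which forces you to handle the three tensor factors $S^\bullet(\fn^-)\otimes S^\bullet(\fl)\otimes S^\bullet(\fn)$ simultaneously. As you yourself note, the ideal generator $e_\theta^{k+1}$ lives in $S^{k+1}(\fn)$ but the $\U(\g)$-action spreads it across all three factors, and you offer no mechanism for the ``straightening'' beyond hoping for a good monomial order. That hope is not a proof: controlling the ideal across the triangular decomposition is exactly the hard part, and nothing in your outline addresses it.

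The paper sidesteps this entirely by going one level up. The key observation is that the \emph{whole} adjoint module $\msp_{2m}$ is isomorphic, as an $\msp_{2m}$-module, to $S^2(\C^{2m})^*$, and the latter is precisely the abelian nilradical $\fn^-$ of the parabolic $\fp_{2m}\subset\msp_{4m}$ with Levi $\mgl_{2m}$. Thus $S^\bullet(\msp_{2m})=S^\bullet(\fn^-)$ with no mixed factors, and the quotient by $\langle\U(\mgl_{2m})\circ e_\theta^{k+1}\rangle$ is, by a standard cominuscule argument, the irreducible $\msp_{4m}$-module $\VVV(k\omega_{2m})$. The miracle specific to type ${\tt C}$ is that $\U(\mgl_{2m})\circ e_\theta^{k+1}=\U(\msp_{2m})\circ e_\theta^{k+1}$ (the $GL_{2m}$-module $\VV(2(k+1)\omega_1)^*$ stays irreducible on restriction to $Sp_{2m}$), so the $C_2$-algebra \emph{is} $\VVV(k\omega_{2m})$. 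The dimension equality then follows from a spherical-variety computation of $\res^{Sp_{4m}}_{Sp_{2m}\times Sp_{2m}}\VVV(k\omega_{2m})$, which matches $\bigoplus_\la V(\la)\otimes V(\la)$ on the nose.

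Your diagnosis of the orthogonal obstruction is also off. The issue is not that $e_\theta$ fails to be an $\fl$-highest weight vector (it is one), but rather that in the analogous doubling $\mso_{2m}\hookrightarrow\mgl_{2m}\hookrightarrow\mso_{4m}$ with $\fn^-\simeq\Lambda^2(\C^{2m})^*$, the $\mgl_{2m}$-submodule generated by $e_\theta^{k+1}$ is strictly larger than the $\mso_{2m}$-submodule it generates; the restriction $\VV((k+1)\omega_2)^*\vert_{\mso_{2m}}$ is reducible. Hence $\VVV(k\omega_{2m})$ is only a quotient of $A_{[2]}(\mso_{2m};k)$, and the paper does not (and does not claim to) prove the conjecture in types ${\tt B}$ and ${\tt D}$.
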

In this paper we prove the conjecture for $\g$ of type
${\tt C}$ ($\g=\msp_{2m}$) and we compute the graded character of
$A_{[2]}(\msp_{2m};k)$.   

Our main idea is to identify the $C_2$-algebra
$A_{[2]}(\msp_{2m};k)$  with the irreducible representation
$\VVV(k\omega_{2m})$ of the bigger algebra $\msp_{4m}$. We
then use the geometry of the affine cone over the flag variety
$Sp_{4m}/P_{2m}$ and the restriction formulas by Littlewood, Koike
and Terada \cite{Lw,KT} 
to prove the graded character formula. 
Here $P_{2m}\subset Sp_{4m}$ is the parabolic subgroup associated to
the fundamental weight $\omega_{2m}$.

Unfortunately, our methods do not generalize to the
orthogonal Lie algebras. The $C_2$-algebra $A_{[2]}(\mso_n;k)$ turns out
to be bigger than the corresponding representation of $\mso_{2n}$ and
therefore we can only describe a quotient of the $C_2$-algebra.
We still show that this quotient (as a $\g$-module) can be very naturally 
identified with a certain subspace of the Zhu's algebra.

The paper is organized as follows:\\
In Section $1$ we recall the definitions of Zhu's algebra and the $C_2$-algebra,
and we formulate the problem.\\
In Sections $2$, $3$ and $4$ we work out the case of the symplectic
algebra $\msp_{2m}$. In Section $2$ we describe the connection between
the $C_2$ -algebra $A_{[2]}(\msp_{2m};k)$ and certain representations
of $\msp_{4m}$.\\
In Section $3$ we compute the graded character of
$A_{[2]}(\msp_{2m};k)$.\\
In Section $4$ we prove Conjecture \ref{conj}
in type $C_m$.\\
Finally, in Section $5$ we discuss the case of the orthogonal algebras.

\section{Zhu's algebra and the $C_2$-algebra}\label{setup}
\subsection{Definitions}
Let $\g$ be a simple Lie algebra. Let $\theta$ be the highest root of $\g$
and let $e_\theta\in\g$ be a highest weight
vector in the adjoint representation.
Fix a non-negative integer $k$.
Let $P_k^+(\g)$ be the set of level $k$ integrable $\g$ weights, i.e
the set of dominant integral $\g$ weights $\la$ satisfying $(\la,\theta)\le k$
(the Killing form is normalized by the usual condition $(\theta,\theta)=2$).
The following Theorem 1.1 is proved in  \cite{FZ}:

\begin{thm}\label{zhualgebra}
The level $k$ Zhu algebra $A(\g;k)$ is the quotient
of the universal enveloping algebra $\U(\g)$ by the two-sided
ideal generated by $e_\theta^{k+1}$:
\be
A(\g;k)=\U(\g)/\langle e_\theta^{k+1}\rangle.
\en
In addition, one has an isomorphism of $\g$-modules:
\[
A(\g;k)\simeq\bigoplus_{\la\in P_k^+(\g)} V(\la)\T V(\la)^*.
\]
\end{thm}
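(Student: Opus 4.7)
The plan is to derive Theorem~\ref{zhualgebra} from Zhu's general formalism applied to the simple affine vertex operator algebra $L_k(\g)$ (the level-$k$ WZW vertex algebra), in three steps: compute Zhu's algebra of the \emph{universal} affine vertex algebra, compute the image of the maximal ideal, and use rationality to pin down the module structure.

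First I would work with the universal vacuum module $V^k(\g)=\U(\gh)\T_{\U(\g[t]\oplus\C K)}\C$ at level $k$. Using the inductive definition of Zhu's subspace $O(V^k(\g))$ and the PBW filtration, one verifies that $x\mapsto [x(-1)|0\rangle]$ extends to an isomorphism of associative algebras $\U(\g)\xrightarrow{\sim} A(V^k(\g))$: modulo $O(V^k(\g))$ the Zhu product becomes the normally ordered product of zero modes, while ``higher'' modes $x(-n)$ with $n\ge 2$ are absorbed into $O(V^k(\g))$.

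Second, $L_k(\g)=V^k(\g)/J_k$ is obtained by quotienting by the maximal proper ideal $J_k$, which for $k\in\Z_{\ge 0}$ is generated, as an affine module, by the single singular vector $v_k=e_\theta(-1)^{k+1}|0\rangle$ (Kac). Since Zhu's functor passes to quotients, one gets $A(L_k(\g))\simeq\U(\g)/I_k$, where $I_k$ is the two-sided ideal generated by $[v_k]\in\U(\g)$. The main---and I expect the hardest---computation is to prove
\[
[e_\theta(-1)^{k+1}|0\rangle]=e_\theta^{k+1}\quad\text{in }\U(\g).
\]
I would do this by induction on $n$, showing $[a(-1)^{n}|0\rangle]\equiv a^n \pmod{O(V^k(\g))}$ for $a\in\g$, tracking the correction terms produced when modes $a(-n)$ with $n\ge 2$ are replaced (via Zhu-type identities such as $a(-2)b\equiv -a\cdot b$) by zero modes, and checking that each such correction lies in $O(V^k(\g))$. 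This yields the first isomorphism in the theorem.

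For the $\g$-module decomposition I would appeal to two classical facts. By the Frenkel--Zhu correspondence, irreducible positive-energy $L_k(\g)$-modules are in bijection with irreducible $A(L_k(\g))$-modules via taking the subspace of lowest conformal weight. On the other hand, the irreducible positive-energy $L_k(\g)$-modules are exactly the integrable highest weight $\gh$-modules at level $k$, parametrized by $\la\in P_k^+(\g)$. Hence the simple $A(\g;k)$-modules are precisely $\{V(\la):\la\in P_k^+(\g)\}$, and by rationality of $L_k(\g)$ the algebra $A(\g;k)$ is finite-dimensional and semisimple. Wedderburn's theorem then gives
\[
A(\g;k)\simeq\bigoplus_{\la\in P_k^+(\g)}\mathrm{End}\bigl(V(\la)\bigr)\simeq\bigoplus_{\la\in P_k^+(\g)}V(\la)\T V(\la)^{*}
\]
as $\g$-modules, where $\g$ acts by conjugation on each summand.
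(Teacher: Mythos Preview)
The paper does not give its own proof of this theorem; it merely cites the result from Frenkel--Zhu \cite{FZ}, adding only the one-line remark that ``the form of the description of $A(\g;k)$ arises ultimately because of the Peter-Weyl Theorem.'' Your proposal is essentially a sketch of exactly the Frenkel--Zhu argument being cited, so it is aligned with the paper's (outsourced) proof; your use of Wedderburn at the end is the algebraic incarnation of the Peter--Weyl remark.

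One small comment on your sketch: the step you flag as the ``hardest''---showing that $[e_\theta(-1)^{k+1}|0\rangle]=e_\theta^{k+1}$ in $\U(\g)$---is in fact immediate and needs no inductive bookkeeping of correction terms. For $a\in\g$ (conformal weight $1$) Zhu's product is $a*b=a(-1)b+a(0)b$; since $e_\theta(0)$ acts as $\ad e_\theta$ and $[e_\theta,e_\theta]=0$, one has $e_\theta*\big(e_\theta(-1)^n|0\rangle\big)=e_\theta(-1)^{n+1}|0\rangle$ exactly, so under the isomorphism $A(V^k(\g))\simeq\U(\g)$ the class of $e_\theta(-1)^{k+1}|0\rangle$ is literally $e_\theta^{k+1}$.
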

The form of the description of $A(\g;k)$ arises ultimately because of the
Peter-Weyl Theorem.
\begin{nota}
Let $S(\g)=\bigoplus_{m=0}^\infty S^m(\g)$
be the symmetric algebra of $\g$.
For $v\in S^m(\g)$ and $a\in\g$ let $av\in S^{m+1}(\g)$
be the product in the symmetric algebra.
Each homogeneous summand $S^m(\g)$ is
a $\g$-module by the adjoint action on $\g$. For $v\in S^m(\g)$ and $a\in \g$ we denote by $a\circ v\in S^m(\g)$
the adjoint action of $a$.
\end{nota}

The $C_2$-algebra associated with $\V(\g;k)$ can be described as follows.
The level $k$ $C_2$-algebra $A_{[2]}(\g;k)$ is the quotient
of the symmetric algebra $S(\g)$ by the
ideal generated by the subspace $V_{k+1}=\U(\g)\circ e_\theta^{k+1}\hk S^{k+1}(\g)$:
\be
A_{[2]}(\g;k)=S(\g)/\langle V_{k+1}\rangle.
\en
\begin{rem}
The subspace $V_{k+1}\hk S^{k+1}(\g)$ is isomorphic to the
irreducible $\g$-module $V((k+1)\theta)$ of highest weight $(k+1)\theta$.
The algebra $A_{[2]}(\g;k)$ is naturally a $\g$-module,
the module structure being induced by the adjoint action. Note that
$A_{[2]}(\g;k)$ is {\bf not} a $\g\oplus\g$-module, differently from $A(\g;k)$.
\end{rem}

Consider the standard filtration $F_\bullet$ on the universal enveloping
algebra $\U(\g)$, such that $\gr_\bullet F\simeq S(\g)$. Let
$F_\bullet(k)$ be the induced filtration on the quotient algebra
$A(\g;k)$.
We have an obvious surjection
\begin{equation}
A_{[2]}(\g;k)\to \gr_\bullet F(k).
\end{equation}
Therefore, we have a surjective homomorphism of $\g$-modules
\begin{equation}\label{ge}
A_{[2]}(\g;k)\to A(\g;k)
\end{equation}
and thus
$\dim A_{[2]}(\g;k)\ge \sum_{\beta\in P_k^+(\g)} (\dim V_\beta)^2 $.
A natural question is: when does this inequality turn into an equality?
In this paper we are also interested in the degree grading on $A_{[2]}(\g;k)$ and in the
corresponding graded decomposition into the direct sum of $\g$-modules. Let
\[
S(\g)=\bigoplus_{m\ge 0} S^m(\g)
\]
be the degree decomposition of the symmetric algebra. This decomposition induces
the decomposition of the $C_2$-algebra:
\begin{equation}
A_{[2]}(\g;k)=\bigoplus_{m\ge 0} A_{[2]}^m(\g;k).
\end{equation}
Each space $A_{[2]}^m(\g;k)$ is naturally a representation of $\g$.
Our main questions are as follows:
\begin{enumerate}
\item \label{dimeq}
Prove the equality of the dimensions of $A_{[2]}(\g;k)$ and $A(\g;k)$.
\item \label{grch}
Find the decomposition of $A_{[2]}^m(\g;k)$
into the direct sum of irreducible $\g$-modules.
\end{enumerate}
Recall that the case $\g=\msl_n$ was considered in \cite{GG} and \cite{FFL}.
To be more precise, it was conjectured in \cite{GG} and proved in \cite{FFL} that
\[
\dim A_{[2]}(\msl_n;k)=\dim A(\msl_n;k)
\]
and, as $\msl_n$-modules (not as $\msl_n\oplus\msl_n$-modules, despite the description
as tensor products), one has a decomposition
\[
A^m_{[2]}(k)=\frac
{\bigoplus_{\substack{\la:\ k\ge \la_1,\ \la_n\ge 0\\ \la_1+\dots + \la_n=m}} V(\la)\T V(\la)^*}
{\bigoplus_{\substack{\la:\ k-1\ge \la_1,\ \la_n\ge 0\\ \la_1+\dots + \la_n=m-1}} V(\la)\T V(\la)^*},
\]
where the $\gl_n$-module $V(\la)$ is regarded as $\msl_n$-module
with highest weight $(\la_1-\la_n,\dots,\la_{n-1}-\la_n)$.

In this paper we solve the problems \eqref{dimeq} and \eqref{grch} for
$\g=\msp_{2m}$
and obtain certain results for orthogonal algebras.

\section{The Lie algebra $\msp_{4m}$ and the $C_2$-algebra $A(\msp_{2m};k)$}\label{sympgrading}
%\section{The graded character in type ${\tt C}_{m}$}\label{sympgrading}
%Throughout the paper (except for the last section)
%we restrict ourselves to the case $\g=\msp_{2m}$. So
%sometime we omit $\g$ in the notation of Zhu's and $C_2$-algebras and just write
%$A(k), A_{[2]}(k)$ and $A_{[2]}^m(k)$ instead of
%$A(\msp;k), A_{[2]}(\msp;k)$ and $A_{[2]}^m(\msp;k)$
%for the algebras respectively its homogeneous parts.

The idea of the following construction is to realize $A_{[2]}(\msp_{2m};k)$ as a representation of the much
larger algebra $\msp_{4m}$, and then use restriction algorithm arguments to prove a graded
character formula as well as the equality of the dimensions of the $C_2$-algebra and
Zhu's algebra.
\vskip 3pt
\noindent
\subsection{}
The enumeration of the fundamental weights is as in \cite{B}.
Let $\omega_1,\dots,\omega_m$ be the set of fundamental weights for the Lie algebra $\msp_{2m}$.
The highest root is $\theta=2\omega_1$, and for a dominant
integral $\la=\sum_{i=1}^m a_i\omega_i$ the condition
$(\la,\theta)\le k$ reads as $\sum_{i=1}^m a_i\le k$. Recall that for
any dominant weight $\la\in P^+(\msp_{2m})$ we have $V(\la)\simeq V(\la)^*$, so
Theorem \ref{zhualgebra} can be reformulated as
\begin{equation}\label{sptheorem}
A(\msp_{2m};k)\simeq
\bigoplus_{\substack{\la=\sum_{i=1}^m a_i\omega_i\\ \sum_{i=1}^m a_i\le k}} V(\la)\T V(\la).
\end{equation}

Let $\fp_{2m}\subset \msp_{4m}$ be the maximal parabolic Lie-subalgebra associated to the fundamental
weight $\om_{2m}$. We fix a Levi decomposition $\fp_{2m}=\fl\oplus \fn$
and $\fg=\fn^-\oplus\fp_{2m}$, where $\fl\simeq\mgl_{2m}$
as Lie algebra, $\fn\simeq S^2(\bc^{2m})$ and $\fn^-\simeq S^2(\bc^{2m})^*$ as $\fl=\mgl_{2m}$-module. The $\mgl_{2m}$-representations on $\fn$ and $\fn^-$ remain irreducible 
with respect to the action of the subalgebra $\msp_{2m}\subset \mgl_{2m}$, 
both representations are isomorphic to the adjoint representation of the symplectic Lie algebra.
Summarizing we have:
\begin{lem}\label{fnisomorphism}
As $\fl=\mgl_{2m}$-module we have isomorphisms: $\fn\simeq  S^2(\bc^{2m})$ and 
$\fn^-\simeq  S^2(\bc^{2m})^*$, as $\msp_{2m}$-module we have isomorphisms:
$\fn\simeq \fn^-\simeq \msp_{2m}$.
%As $\fl=\mgl_{2m}$ as well as $\msp_{2m}\subset \mgl_{2m}$-module we have isomorphisms:
%$\fn\simeq  S^2(\bc^{2m})\simeq \msp_{2m}$ and $\fn^-\simeq  S^2(\bc^{2m})^*\simeq \msp_{2m}$.
\end{lem}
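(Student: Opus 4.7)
The plan is a direct unwinding of the standard matrix realization of $\msp_{4m}$. I would fix the symplectic form $J$ on $\bc^{4m}$ whose matrix has $I_{2m}$ in the upper-right and $-I_{2m}$ in the lower-left $2m\times 2m$ block; with this choice the Siegel parabolic $\fp_{2m}$ becomes block upper-triangular. A general $X\in\msp_{4m}$ then takes the block form with diagonal blocks $A,-A^t$ and off-diagonal blocks $B=B^t$, $C=C^t$. Reading off this description, the Levi $\fl$ is identified with $\mgl_{2m}$ via the map $\mathrm{diag}(A,-A^t)\mapsto A$.

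For the nilradical, the bracket $[\mathrm{diag}(A,-A^t),B]=AB+BA^t$ on an upper-right symmetric block $B$ is precisely the natural $\mgl_{2m}$-action on symmetric tensors, exhibiting $\fn\simeq S^2(\bc^{2m})$ as $\mgl_{2m}$-modules. A parallel computation on the lower-left block, where the induced action is $A\cdot C=-(A^tC+CA)$, is the contragredient action and thus yields $\fn^-\simeq S^2(\bc^{2m})^*$.

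For the $\msp_{2m}$-statement I would argue as follows. The standard embedding $\msp_{2m}\hookrightarrow\mgl_{2m}$ makes $\bc^{2m}$ into the defining $\msp_{2m}$-representation $V(\omega_1)$, hence $\fn\simeq S^2V(\omega_1)$ as $\msp_{2m}$-modules. Now $v_{\omega_1}^{\,2}\in S^2V(\omega_1)$, where $v_{\omega_1}$ is a highest-weight vector of $V(\omega_1)$, has weight $2\omega_1=\theta$, the highest root of $\msp_{2m}$. It therefore generates a submodule isomorphic to $V(2\omega_1)\simeq\msp_{2m}$, and the dimension match $\binom{2m+1}{2}=m(2m+1)=\dim\msp_{2m}$ shows this submodule exhausts $S^2V(\omega_1)$. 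Self-duality of symplectic representations then forces $\fn^-\simeq\msp_{2m}$ as well.

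There is no real obstacle in this argument; the lemma is essentially a bookkeeping statement about the Siegel parabolic of $\msp_{4m}$ combined with the well-known irreducibility of $S^2$ of the defining representation of $\msp_{2m}$. The only point requiring a moment of care is compatibility: the symplectic form on $\bc^{4m}$ and the symplectic form on $\bc^{2m}$ implicit in $\msp_{2m}\hookrightarrow\mgl_{2m}\hookrightarrow\msp_{4m}$ must be chosen so that the standard $\mgl_{2m}$-module $\bc^{2m}$ restricts to the defining (and not some twisted) $\msp_{2m}$-representation.
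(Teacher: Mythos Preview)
Your proposal is correct and follows essentially the same approach as the paper. The paper does not give a separate proof of this lemma; it merely states the isomorphisms in the paragraph preceding the lemma as well-known facts about the Siegel parabolic (``Summarizing we have:\ldots''), so your explicit block-matrix computation of the $\mgl_{2m}$-action on $\fn$ and $\fn^-$, together with the dimension-count argument $\dim S^2(\bc^{2m})=\binom{2m+1}{2}=\dim\msp_{2m}$ for irreducibility under $\msp_{2m}$, simply fills in the details the paper leaves implicit. The only cosmetic difference is that the paper works with the anti-diagonal symplectic form and the transpose $A^{nt}$ rather than your $I_{2m}/{-I_{2m}}$ block form and ordinary transpose, but this is a harmless change of basis.
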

In the following we always 
assume that for $\ell\in\bn$ the symplectic group  $Sp_{2\ell}$ is defined to be the
group leaving invariant the skew symmetric form on $\bc^{2\ell}$ defined by the $2\ell\times 2\ell$-matrix:
$$
J=\left(\begin{array}{ccccc}0 & 0 & 0 & 0 & 1 \\0 & 0 & 0 & 1 & 0 \\0 & 0 & .\cdot\,{}^\cdot  & 0 & 0 \\0 & -1 & 0 & 0 & 0 \\-1 & 0 & 0 & 0 & 0\end{array}\right).
$$
For a $m\times m$ matrix $A$ let $A^{nt}$ be the transpose of a matrix with respect to the diagonal given by
${i+j}=2m+1$, i.e. for $A=(a_{i,j})$ the matrix
$A^{nt}=( a^{nt}_{i,j})$ is given by $a^{nt}_{i,j}=a_{2m+1-j,2m+1-i}$. The Lie algebra
of the symplectic group $Sp_{2m}$ can then be described as the following set of matrices:
$$
\msp_{2m}=\left\{\left(\begin{array}{cc}
A & B  \\ C & -A^{nt} \end{array}\right)\big\vert
A,B,C\in M_m, \,B=B^{nt},\, C=C^{nt}
\right\}
$$
with maximal torus $\ft=\diag(t_1,\ldots,t_m,-t_m\ldots,-t_1)$ and Borel subalgebra the upper
triangular matrices of the form above.

The Lie algebra of the symplectic group $Sp_{2m}\subset GL_{2m}$ embedded in the Levi subgroup
$GL_{2m}\subset Sp_{4m}$ can be seen as the set of matrices of the following form:
\begin{equation}\label{spembedd}
\msp_{2m}=\left\{\left(\begin{array}{cccc}
A & B & 0 & 0 \\ C & -A^{nt} & 0 & 0 \\ 0 & 0 & A & -B \\ 0 & 0 & -C & -A^{nt}
\end{array}\right)\big\vert
\begin{array}{c}
A,B,C\in M_m, \\ B=B^{nt}\\ C=C^{nt}
\end{array}
\right\}
\subset \msp_{4m}.
\end{equation}
There is also a maximal reductive sub-Lie-algebra of type ${\tt C}_{m}+ {\tt C}_{m}$
sitting inside $\mathfrak{sp}_{4m}$ in the following way:
$$
\msp_{2m}\oplus \msp_{2m}=\left\{
\left(
\begin{array}{cccc}
K & 0 & 0 & L \\ 0 & X & Y & 0 \\ 0 & Z & -X^{nt} & 0 \\ M & 0 & 0 & -K^{nt}
\end{array}
\right)
\big\vert
\begin{array}{c}
K,L,M,X,Y,Z \in M_m \\ L=L^{nt},M=M^{nt} \\ Y=Y^{nt},Z=Z^{nt}
\end{array}
\right\}
%\subset \msp_{4m},
$$
Let $\eins_m$ denote the $m\times m$ identity matrix and let $J'$ be the $2m\times 2m$-matrix of the form
$$
J'=\left(\begin{array}{cc} 0 & \eins_m \\ -\eins_m & 0 \end{array}\right).
$$
Inside the Lie subalgebra $\mathfrak{sp}_{2m}\oplus \mathfrak{sp}_{2m}$ we have a diagonally
embedded symplectic Lie algebra $\Delta(\mathfrak{sp}_{2m})$, where:
\begin{equation}
\label{deltaspembedd}
\Delta:\mathfrak{sp}_{2m}\hookrightarrow \mathfrak{sp}_{2m}\oplus \mathfrak{sp}_{2m},
\quad Q\mapsto (Q,J'Q{J'}^{-1})
\end{equation}
which has the same maximal torus as the embedded symplectic Lie subalgebra $\mathfrak{sp}_{2m}$ described
in (\ref{spembedd}). As a consequence we see:

\begin{lem}\label{charactersame}
For a $\mathfrak{sp}_{4m}$-representation $\VVV(\la)$ let 
$$
\res^{\mathfrak{sp}_{4m}}_{\mathfrak{sp}_{2m}}\VVV(\la)
$$
be the $\mathfrak{sp}_{2m}$-module obtained via the embedding in (\ref{spembedd}) and let
$$
\res^{\mathfrak{sp}_{4m}}_{\Delta(\mathfrak{sp}_{2m})}\VVV(\la)
$$
be the $\mathfrak{sp}_{2m}$-module obtained via the embedding in (\ref{deltaspembedd}).
Then  
$$
\res^{\mathfrak{sp}_{4m}}_{\mathfrak{sp}_{2m}}\VVV(\la)\simeq
\res^{\mathfrak{sp}_{4m}}_{\Delta(\mathfrak{sp}_{2m})}\VVV(\la).
$$
\end{lem}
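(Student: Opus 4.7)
The plan is to reduce the lemma to the observation, already noted in the paragraph preceding the statement, that the two embeddings $\msp_{2m}\hookrightarrow\msp_{4m}$—the direct one from (\ref{spembedd}) and the composition of $\Delta$ with the inclusion $\msp_{2m}\oplus\msp_{2m}\subset\msp_{4m}$—induce the same map on a fixed maximal torus $\ft\subset\msp_{2m}$. Granting this identification of toral images, the conclusion is immediate: since $\msp_{2m}$ is semisimple, a finite-dimensional $\msp_{2m}$-module is determined up to isomorphism by its character, which is in turn determined by the weight multiplicities on $\ft$. Because the two restrictions compute these multiplicities on the very same subset of $\msp_{4m}$, the two characters coincide, and hence so do the two $\msp_{2m}$-modules $\res^{\msp_{4m}}_{\msp_{2m}}\VVV(\la)$ and $\res^{\msp_{4m}}_{\Delta(\msp_{2m})}\VVV(\la)$.

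All the work is therefore concentrated in verifying the torus identification by direct matrix calculation. For $Q=\diag(t_1,\dots,t_m,-t_m,\dots,-t_1)\in\ft$, the direct embedding (\ref{spembedd}), with $A=\diag(t_1,\dots,t_m)$ and $B=C=0$, yields the $4m\times 4m$ diagonal matrix with diagonal entries
\[
(t_1,\dots,t_m,-t_m,\dots,-t_1,t_1,\dots,t_m,-t_m,\dots,-t_1).
\]
For the diagonal embedding, a $2\times 2$ block computation shows that conjugation by $J'$ swaps the two diagonal $m\times m$-blocks of $Q$, i.e.\ $J'QJ'^{-1}=\diag(-t_m,\dots,-t_1,t_1,\dots,t_m)$. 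Substituting the pair $(Q,J'QJ'^{-1})$ into the $(K,L,M;X,Y,Z)$-presentation of $\msp_{2m}\oplus\msp_{2m}$ then places $\diag(t_1,\dots,t_m)$, $\diag(-t_m,\dots,-t_1)$, $\diag(t_1,\dots,t_m)$, $\diag(-t_m,\dots,-t_1)$ on the four $m\times m$ diagonal blocks respectively, which exactly recovers the matrix above.

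The only mild subtlety in this calculation is keeping the anti-diagonal transpose convention consistent, so that $-K^{nt}$ for $K=\diag(t_1,\dots,t_m)$ really equals $\diag(-t_m,\dots,-t_1)$, and likewise for $-X^{nt}$. Once this bookkeeping is carried out, the lemma follows at once from the general fact that the character of a finite-dimensional $\msp_{2m}$-representation is determined by its restriction to $\ft$, and that characters distinguish representations of a semisimple Lie algebra.
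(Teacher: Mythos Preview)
Your proof is correct and is exactly the argument the paper intends: the lemma is stated immediately after the observation that the two embeddings share the same maximal torus, and the paper offers no further proof beyond ``As a consequence we see.'' You have simply made explicit the torus computation and the standard fact that characters determine finite-dimensional $\msp_{2m}$-modules, so your approach coincides with the paper's.
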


\subsection{The Lie algebra $\msp_{4m}$ and $A_{[2]}(\msp_{2m};k)$.}
We fix the standard maximal torus (diagonal matrices) and Borel subalgebra (upper triangular matrices)
for $\mgl_{2m}$. Then the maximal torus and the Borel subalgebra of
$\msp_{2m}$ and $\mgl_{2m}$ are contained in each other. Let $\theta$ be the maximal
root in the root system of $\msp_{2m}$. We identify the module $\msp_{2m}$ again with $\fn^-$ (Lemma~\ref{fnisomorphism}).
Fix a highest root vector $x_\ta$, then
$x_\ta$ is a highest weight vector for the adjoint action of $\msp_{2m}$ as well as
for the irreducible action of $\mgl_{2m}$ on the same representation space.
It follows that
\begin{equation}
\label{rootsandweightsp}
x_\ta^{k+1}\in S^\bullet(\msp_{2m})=S^\bullet(S^2(\bc^{2m}))^*=S^\bullet(\fn^-)
\end{equation}
is a highest weight vector of weight $2(k+1)\omega_1$ for the action of the symplectic group
$Sp_{2m}$ and of weight $-2(k+1)\epsilon_{2m}$ for the general linear group $GL_{2m}$. One checks easily
the following connection between $x_\ta$ and the root vectors for the Lie algebra $\msp_{4m}$:
\begin{lem}\label{rootequality}
Let $X_{-\alpha_{2m}}\in\fn^-\subset\msp_{4m}$ be a root 
vector for the negative of the simple root $\alpha_{2m}$ in the root system
for the Lie algebra $\msp_{4m}$. With respect to the embedding in (\ref{spembedd}), $X_{-\alpha_{2m}}$
is a weight vector for the Lie algebra $\msp_{2m}$ of weight $\theta$ and can hence be identified with 
$x_\theta$. 
%In particular,
%$X_{-\alpha_{2m}}$ and $x_\ta$ (see (\ref{rootsandweightsp})) are conjugate
%(up to some non-zero scalar multiple) with respect to action of the Weyl group of
%$Sp_{2m}$ on $\fn$.
\end{lem}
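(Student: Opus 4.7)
The plan is to verify the lemma by a direct weight-space computation, using the explicit description of the torus embedding $\ft_{\msp_{2m}} \hookrightarrow \ft_{\mgl_{2m}} \hookrightarrow \ft_{\msp_{4m}}$ coming from (\ref{spembedd}), and then invoke a one-dimensionality argument to make the identification with $x_\theta$ unique up to a scalar.

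First I would fix notation for the root data. Let $\epsilon_1,\ldots,\epsilon_{2m}$ be the standard basis for the character lattice of the diagonal torus of $\msp_{4m}$, so that the simple roots are $\alpha_i = \epsilon_i - \epsilon_{i+1}$ for $i < 2m$ and $\alpha_{2m} = 2\epsilon_{2m}$. The positive roots with non-trivial $\alpha_{2m}$-coefficient are precisely $\epsilon_i+\epsilon_j$ ($i\le j$), which exhausts the weights of $\fn$, consistent with the identification $\fn \simeq S^2(\bc^{2m})$ from Lemma \ref{fnisomorphism}. In particular $-\alpha_{2m}=-2\epsilon_{2m}$ is a weight of $\fn^-$ and $X_{-\alpha_{2m}}$ spans the corresponding one-dimensional weight space.

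Next I would compute the weight of $X_{-\alpha_{2m}}$ with respect to the torus of $\msp_{2m}$ via the embedding (\ref{spembedd}). A diagonal element of $\msp_{2m}$ is $\diag(t_1,\ldots,t_m,-t_m,\ldots,-t_1)$, which, when embedded into $\mgl_{2m}\subset\msp_{4m}$ using the block pattern in (\ref{spembedd}), lives in the diagonal torus of $\msp_{4m}$ with entries
\[
(t_1,\ldots,t_m,-t_m,\ldots,-t_1,t_1,\ldots,t_m,-t_m,\ldots,-t_1).
\]
Reading off the identification $\epsilon_{2m}\leftrightarrow -t_1$, the restriction of the weight $-\alpha_{2m}=-2\epsilon_{2m}$ to the torus of $\msp_{2m}$ is $2t_1 = 2\epsilon_1^{\msp_{2m}} = 2\omega_1 = \theta$, which is exactly the highest root of $\msp_{2m}$.

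Finally, to pass from ``weight vector of weight $\theta$'' to ``can be identified with $x_\theta$'', I would use that under the isomorphism $\fn^-\simeq\msp_{2m}$ of $\msp_{2m}$-modules in Lemma \ref{fnisomorphism}, the $\theta$-weight space has dimension one (being the $\theta$-weight space of the adjoint representation), and this line is spanned by $x_\theta$ by definition; on the other hand, the same line in $\fn^-$ is spanned by $X_{-\alpha_{2m}}$ by the paragraph above. Hence $X_{-\alpha_{2m}}$ and $x_\theta$ coincide up to a non-zero scalar, and after rescaling we may identify them. The only step requiring real care is the careful bookkeeping of the torus embedding in (\ref{spembedd}) so that the sign conventions for $\epsilon_{2m}$ and $t_1$ match; beyond that the assertion is essentially a weight count, and there is no genuine obstacle.
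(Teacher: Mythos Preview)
Your proposal is correct and is precisely the direct verification the paper has in mind when it introduces the lemma with ``one checks easily''; the paper gives no further argument, and your explicit tracking of the torus embedding $\diag(t_1,\ldots,t_m,-t_m,\ldots,-t_1)\mapsto\diag(t_1,\ldots,-t_1,t_1,\ldots,-t_1)$ together with the one-dimensionality of the $\theta$-weight space in $\fn^-\simeq\msp_{2m}$ is exactly the intended computation.
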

To distinguish between the highest weight representations of the different groups, we
write $V(\la)$ for the $Sp_{2m}$-representations, $\VV(\la)$ for the $GL_{2m}$-representations and
$\VVV(\la)$ for the $Sp_{4m}$-representations of highest weight $\la$ (whenever this makes sense).

The irreducible $GL_{2m}$-module $U(\mgl_{2m})\circ x_\ta^{k+1}$
generated by $x_\ta^{k+1}$ is the module $\VV(2(k+1)\omega_1)^*$, and hence
remains irreducible when restricted to $Sp_{2m}$, i.e. we have the following
sequence of equalities of vector spaces:
$$
\begin{array}{rcl}
\label{samespace}
U(\mgl_{2m})\circ x_\ta^{k+1} &=& U(\msp_{2m})\circ x_\ta^{k+1}\\
&=& U(\msp_{2m})\circ X_{-\alpha_{2m}}^{k+1} \\
&=& U(\mgl_{2m})\circ X_{-\alpha_{2m}}^{k+1}.
\end{array}
$$
Let $\VVV(k\omega_{2m})$ be the irreducible $Sp_{4m}$-module of highest weight $k\omega_{2m}$.
The nilpotent radical $\fn$ of $\fp_{2m}$ is abelian (since $\om_{2m}$ is a cominuscule weight,
see for example \cite{FFL}). Recall the following isomorphism of $\fl$-modules
(see \cite{FFL}, Lemma 3.1):
$$
\VVV(k\omega_{2m})\otimes\bc_{-k\om_{2m}}\simeq S^\bullet(\fn^-)/\langle U(\fl)\circ x_\ta^{k+1}\rangle,
$$
where  $\langle \cdots \rangle$ denotes the ideal
generated by the corresponding subspace.
Combining this isomorphism with (\ref{samespace}), we get
as a consequence the following isomorphisms of $GL_{2m}$-modules
as well as $Sp_{2m}$-modules. In particular, the $C_2$-algebra $ A_{[2]}(\msp_{2m};k)$ inherits the structure
of a $GL_{2m}$-module:
\begin{lem}\label{representglsp}
\begin{eqnarray*}
\VVV(k\omega_{2m})\otimes\bc_{-k\om_{2m}}&\simeq&S^\bullet(\fn^-)/
\langle U(\mgl_{2m})\circ X_{-\alpha_{2m}}^{k+1}\rangle\\
&=&S^\bullet(\msp_{2m})/\langle U(\msp_{2m})\circ x_\ta^{k+1}\rangle\\
&\simeq& A_{[2]}(\msp_{2m};k).
\end{eqnarray*}
\end{lem}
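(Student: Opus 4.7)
The three lines of the lemma assemble ingredients that are already in place, so the plan is to verify that the three presented quotients describe the same object rather than to prove a new representation-theoretic fact about $\VVV(k\om_{2m})$. I would present the argument as a short chain of identifications, checking at each step that the generating submodule of the ideal is literally the same subspace of the (same) symmetric algebra.

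For the top isomorphism I would invoke \cite{FFL}, Lemma~3.1, applied to the cominuscule fundamental weight $\om_{2m}$ of $\msp_{4m}$. Since the nilpotent radical $\fn$ of $\fp_{2m}$ is abelian (already observed in the text), that lemma yields an isomorphism of $\fl$-modules
\[\VVV(k\om_{2m})\otimes\bc_{-k\om_{2m}}\simeq S^\bullet(\fn^-)/\langle U(\mgl_{2m})\circ X_{-\alpha_{2m}}^{k+1}\rangle,\]
which I would cite directly. For the middle equality I would combine Lemma~\ref{fnisomorphism}, which gives $\fn^-\simeq\msp_{2m}$ as $\mgl_{2m}$- and hence $\msp_{2m}$-modules, with Lemma~\ref{rootequality}, which identifies $X_{-\alpha_{2m}}$ with the highest-root vector $x_\ta$ of $\msp_{2m}$ under this identification. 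These two identifications together extend to an isomorphism of graded algebras $S^\bullet(\fn^-)\simeq S^\bullet(\msp_{2m})$ under which the generating subspaces $U(\mgl_{2m})\circ X_{-\alpha_{2m}}^{k+1}$ and $U(\msp_{2m})\circ x_\ta^{k+1}$ become literally equal, thanks to the chain of equalities (\ref{samespace}) established just above the lemma. The bottom isomorphism is then the definition of $A_{[2]}(\msp_{2m};k)$.

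The one point that requires a moment of care is the equality of generating \emph{subspaces}, not merely of their highest-weight lines. This is exactly the content of (\ref{samespace}): the module $U(\mgl_{2m})\circ x_\ta^{k+1}\simeq\VV(2(k+1)\om_1)^*$ is irreducible as a $GL_{2m}$-module and remains irreducible on restriction to $Sp_{2m}$, so the $U(\mgl_{2m})$- and $U(\msp_{2m})$-orbits of the highest weight vector $x_\ta^{k+1}$ span the same subspace. With this observation in place the lemma follows by chaining the three identifications, with no further computation to be done.
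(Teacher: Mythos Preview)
Your proposal is correct and follows exactly the paper's own argument: the paragraph preceding the lemma in the paper cites \cite{FFL}, Lemma~3.1 for the first isomorphism and then invokes the chain of equalities (\ref{samespace}) to conclude, with the final line being the definition of $A_{[2]}(\msp_{2m};k)$. Your additional remark that the crucial step is the equality of generating subspaces (not just highest-weight lines), supplied by the irreducibility of $\VV(2(k+1)\om_1)^*$ upon restriction to $Sp_{2m}$, is precisely the observation the paper makes just above (\ref{samespace}).
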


\section{The graded character of the $C_2$-algebra for $\msp_{2m}$.}
%{\bf \ref{sympgrading}
%.2 The graded character of the $C_2$-algebra for .}
Let $P_{2m}\subset Sp_{4m}$ be the parabolic subgroup 
associated to the fundamental weight $\om_{2m}$.
By \cite{L1} it is known that the action of the Levi subgroup $L=GL_{2m}\subset P_{2m}$
on $Sp_{4m}/P_{2m}$ is spherical, i.e., a Borel subgroup of $L$ has a dense orbit in
$Sp_{4m}/P_{2m}$. As a consequence, the restriction of an irreducible $Sp_{4m}$-module
of highest weight $\ell\om_{2m}$, $\ell\in\bn$, to $L$ is multiplicity free. Let $\omega_0$
denote the trivial character.  The tables in \cite{L1} imply:
\begin{prop}\label{Gldecomp}
As $GL_{2m}$-module, the $C_2$-algebra $A_{[2]}(\msp_{2m};k)$ decomposes as:
\begin{eqnarray*}
A_{[2]}(\msp_{2m};k)&=&\bigoplus_{\substack{\la=2(a_1\om_1+\ldots+a_{2m-1}\om_{2m-1})\\ a_1+\ldots + a_{2m}\le k}}
\VV(\la)\otimes\bc_{2(a_{2m}-k)\om_{2m}}
\end{eqnarray*}
\end{prop}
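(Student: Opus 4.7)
My plan is to reduce the computation to a restriction problem for $Sp_{4m}$ and apply the sphericity results cited from \cite{L1}.

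The starting point is Lemma~\ref{representglsp}, which identifies $A_{[2]}(\msp_{2m};k)$, as a $GL_{2m}$-module, with $\VVV(k\om_{2m})\otimes\bc_{-k\om_{2m}}$. Hence the proposition is equivalent to the statement
\[
\VVV(k\om_{2m})\Big|_{GL_{2m}}\ \simeq\ \bigoplus_{\substack{\la=2(a_1\om_1+\cdots+a_{2m-1}\om_{2m-1})\\ a_1+\cdots+a_{2m}\le k}} \VV(\la+2a_{2m}\om_{2m}),
\]
so the task is a pure branching computation from $Sp_{4m}$ to its Levi subgroup $L=GL_{2m}\subset P_{2m}$. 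This is the natural place to invoke \cite{L1}: the Levi $L$ acts on the flag variety $Sp_{4m}/P_{2m}$ (the Lagrangian Grassmannian of $\bc^{4m}$) spherically, i.e. a Borel subgroup of $L$ has an open orbit. Since $\om_{2m}$ is cominuscule, an iteration of the Borel-Weil identification
\[
\bigoplus_{\ell\ge 0}\VVV(\ell\om_{2m})^*\ \simeq\ H^0\bigl(Sp_{4m}/P_{2m},\mathcal{L}\bigr)
\]
then shows that each $\VVV(\ell\om_{2m})$ is multiplicity-free as a $GL_{2m}$-representation.

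Given multiplicity-freeness, all that remains is to identify the set of $GL_{2m}$-highest weights that actually occur. The explicit decomposition follows from the classical description of the Lagrangian Grassmannian: its open affine cell is the space of symmetric $2m\times 2m$ matrices, and the coordinate ring of the affine cone can be analyzed via the Cartan component structure coming from $\fn^-\simeq S^2(\bc^{2m})^*$ as $L$-module. The tables in \cite{L1} list precisely these weights for the pair $(Sp_{4m},GL_{2m})$ attached to $\om_{2m}$: they are exactly the weights of the form $2(a_1\om_1+\cdots+a_{2m-1}\om_{2m-1})+2a_{2m}\om_{2m}$ with $a_i\in\bn$ and $a_1+\cdots+a_{2m}\le \ell$ for the irrep $\VVV(\ell\om_{2m})$. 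Substituting $\ell=k$, one obtains the set of $\VV(\mu)$ summands of $\VVV(k\om_{2m})$; twisting by $\bc_{-k\om_{2m}}$ shifts the last coordinate from $2a_{2m}$ to $2(a_{2m}-k)$ and yields the formula in the statement.

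The main obstacle is not conceptual but bookkeeping: one must match the conventions in \cite{L1} (given in terms of partitions and shifts by the determinantal character) with the description of the $GL_{2m}$-weights appearing in $S^\bullet(S^2\bc^{2m})$ used implicitly through Lemma~\ref{fnisomorphism} and Lemma~\ref{representglsp}, and then verify that the twist by $\bc_{-k\om_{2m}}$ transports the $\ell\le k$ condition into the inequality $a_1+\cdots+a_{2m}\le k$ of the statement. Once this identification is made and the bound $a_{2m}\le k$ is seen to be automatic from dominance of $\la+2a_{2m}\om_{2m}-k\cdot(\text{det})$, the proposition follows directly from sphericity plus the tabulated restriction data.
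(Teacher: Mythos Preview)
Your approach is essentially the same as the paper's: both invoke Lemma~\ref{representglsp} to reduce to the branching of $\VVV(k\om_{2m})$ to $GL_{2m}$ and then cite the sphericity and decomposition tables in \cite{L1}, followed by the twist by $\bc_{-k\om_{2m}}$. One small correction to your last paragraph: the inequality $a_1+\cdots+a_{2m}\le k$ does not come from the twist or from any dominance constraint, but directly from the \cite{L1} formula, which indexes the summands of $\VVV(k\om_{2m})$ by $a_0\om_0+2a_1\om_1+\cdots+2a_{2m}\om_{2m}$ with $a_0+a_1+\cdots+a_{2m}=k$, so eliminating the slack variable $a_0$ yields the inequality.
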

\begin{proof} The lemma above and the decomposition formula in \cite{L1} imply:
\begin{eqnarray*}
(\res_{GL_{2m}}^{Sp_{4m}} \VVV(k\omega_{2m}))\otimes\bc_{-k\om_{2m}}\hskip -10pt&=&\hskip-10pt
\bigoplus_{\substack{\la=a_0\omega_0+a_1 2\om_1+\ldots+a_{2m}2\om_{2m}\\ a_0+a_1+\ldots + a_{2m}=k}}\VV(\la)\otimes\bc_{-2k\om_{2m}}\\
%&=&\bigoplus_{\substack{\la=a_1 2\om_1+\ldots+a_{2m}2\om_{2m}\\ a_1+\ldots + a_{2m}\le k}}
%V(\la)\\
\hskip -10pt&=&\hskip-10pt
\bigoplus_{\substack{\la=2(a_1\om_1+\ldots+a_{2m-1}\om_{2m-1})\\ a_1+\ldots + a_{2m}\le k}}
\VV(\la)\otimes\bc_{2(a_{2m}-k)\om_{2m}}.
\end{eqnarray*}
\end{proof}
The center $Z$ of $GL_{2m}$, $Z:=\{t.\eins_{2m}\mid t\in\bc^*\}$, 
acts on $\msp_{2m}=S^2(\bc^{2m})^*=\fn^-$ by $t^{-2}$ and
hence on $S^j(\msp_{2m})$ by $t^{-2j}$. For a Young diagram (or a partition) $\la$ 
the action of the center on a representation $\VV(\la)$ can be 
described as follows. The element $t.\eins_{2m}\in Z$ acts as a multiplication
by $t$ to the power given by the number of boxes in the diagram. 
This leads to the corollary:
\begin{cor}
The $j$-th graded component of the $C_2$-algebra
$A_{[2]}(\msp_{2m};k)=\bigoplus_{j\ge 0 }A^j_{[2]}(\msp_{2m};k)$
decomposes as $\gl_{2m}$-module as follows:
\begin{eqnarray*}
A^j_{[2]}(\msp_{2m};k) &=&
\sum_{\substack{\la=2(a_1\om_1+\ldots+a_{2m-1}\om_{2m-1})\\ a_1+\ldots + a_{2m}\le k\\
a_1+2a_2+3a_3+\ldots + (2m-1)a_{2m-1}+2ma_{2m}+j=2mk}}\hskip -20pt
 \VV(\la)\otimes\bc_{2(a_{2m}-k)\om_{2m}}
%\res^{\gl_{2m}}_{\msp_{2m}} \VV(\la)
\end{eqnarray*}
\end{cor}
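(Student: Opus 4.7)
The plan is to refine Proposition 2.4 to the graded setting by exploiting the action of the center $Z = \{t \cdot \eins_{2m} \mid t \in \bc^*\}$ of $GL_{2m}$. Since the $GL_{2m}$-action on $S^\bullet(\msp_{2m}) = S^\bullet(\fn^-)$ is induced from the natural action on $\fn^- \simeq S^2(\bc^{2m})^*$, the center scales $\fn^-$ by $t^{-2}$ and hence acts on $S^j(\msp_{2m})$ by $t^{-2j}$. This grading descends to the quotient $A_{[2]}(\msp_{2m};k)$, so $A^j_{[2]}(\msp_{2m};k)$ can be characterized intrinsically as the $Z$-isotypic subspace of eigenvalue $t^{-2j}$. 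The task therefore reduces to reading off the $Z$-weight of each summand appearing in Proposition 2.4 and collecting those whose weight matches $t^{-2j}$.

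For the second step I would compute the central character of each irreducible summand explicitly. For a polynomial irreducible $GL_{2m}$-module $\VV(\la)$ attached to a Young diagram $\la$, the center acts as $t^{|\la|}$, where $|\la|$ is the number of boxes, as noted in the statement preceding the corollary. Converting fundamental weights into partition language via $\om_i = \epsilon_1 + \cdots + \epsilon_i$ gives $|\la| = 2(a_1 + 2a_2 + \cdots + (2m-1)a_{2m-1})$ for $\la = 2(a_1\om_1 + \cdots + a_{2m-1}\om_{2m-1})$. The one-dimensional factor $\bc_{2(a_{2m}-k)\om_{2m}}$ is a (possibly negative) power of the determinant, and $Z$ acts on it by $t^{2m \cdot 2(a_{2m}-k)} = t^{4m(a_{2m}-k)}$. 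Hence the total $Z$-weight on $\VV(\la) \otimes \bc_{2(a_{2m}-k)\om_{2m}}$ equals $t^N$, where $N = 2(a_1 + 2a_2 + \cdots + (2m-1)a_{2m-1}) + 4m(a_{2m}-k)$.

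Finally, imposing $N = -2j$ and dividing by $2$ yields exactly the arithmetic constraint $a_1 + 2a_2 + \cdots + (2m-1)a_{2m-1} + 2m a_{2m} + j = 2mk$ appearing in the corollary. Since each summand in Proposition 2.4 lies entirely in a single $Z$-eigenspace, collecting the summands satisfying this relation recovers $A^j_{[2]}(\msp_{2m};k)$ as a $\gl_{2m}$-module. I do not foresee any real obstacle: the corollary is essentially a bookkeeping refinement of Proposition 2.4, and the only delicate points are keeping track of the sign coming from the identification $\fn^- \simeq S^2(\bc^{2m})^*$ (rather than $S^2(\bc^{2m})$) and the conversion between fundamental weights and partition row-lengths.
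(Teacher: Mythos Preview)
Your proposal is correct and follows essentially the same argument as the paper: both identify the degree $j$ component via the action of the center $Z\subset GL_{2m}$, compute that $t\cdot\eins_{2m}$ acts on $\VV(\la)\otimes\bc_{2(a_{2m}-k)\om_{2m}}$ by $t^{2(a_1+2a_2+\dots+(2m-1)a_{2m-1})+4m(a_{2m}-k)}$, and match this with the weight $t^{-2j}$ on $S^j(\fn^-)$ to obtain the stated arithmetic condition. Your write-up is slightly more explicit about the sign coming from the dual in $\fn^-\simeq S^2(\bc^{2m})^*$, but otherwise the arguments coincide.
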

\begin{proof}
We know that $t.\eins_{2m}\in Z$ acts on $\VV(\la)\otimes\bc_{2(a_{2m}-k)\om_{2m}}$ as
\[
t^{2(a_1+2a_2+\dots +(2m-1)a_{2m-1})}t^{2m(2a_{2m}-2k)}.
\]
Therefore the condition
\[
\VV(\la)\otimes\bc_{2(a_{2m}-k)\om_{2m}}\hk A^j_{[2]}(\msp_{2m};k)
\]
can be reformulated as
\[
a_1+2a_2+3a_3+\ldots + (2m-1)a_{2m-1}+2ma_{2m}+j=2mk,
\]
which proves the Corollary.
\end{proof}

The Corollary provides a first step for a formula for the decomposition  of
$A^j_{[2]}(\msp_{2m};k)$ as $\msp_{2m}$-module.
To deduce from the above a graded character formula, recall the following restriction rule
due to Littlewood \cite{Lw} respectively its generalization by Koike and Terada \cite{KT}.
In the $GL_{2m}$ respectively $Sp_{2m}$ setting it is often convenient
to use at the same time the language of partitions as well as the language of highest weights.
By abuse of notation we denote  for a dominant weight $\la=\sum_{i=1}^{2m} a_i\om_i$ by
$\la=(\la_1,\la_2,\ldots)$ also the associated partition given by the rule:
$$
\la_1=\sum _{i=1}^{2m} a_i, \quad \la_2=\sum _{i=2}^{2m} a_i,\quad\ldots\quad ,\quad
\la_{2m-1}=a_{2m-1}+a_{2m},\quad \la_{2m}=a_{2m}.
$$
For three such partitions $\la,\mu,\nu$ we denote by $N_{\mu,\nu}^\la$ the associated Littlewood-Richardson
coefficient, i.e. for a tensor product of $GL_{2m}$-representations,
$N_{\mu,\nu}^\la$ is the multiplicity of $\VV(\la)$ in the tensor product $\VV(\mu)\otimes \VV(\nu)$.

Note that for a partition $\la$ with more than $m$ parts a representation $V(\la)$ for the symplectic
group $Sp_{2m}$ is not defined. In \cite{KT}, section 2.4, one finds an algorithm (folding of Young diagrams)
which associates to a partition $\la$ a partition $\pi(\la)$ with less than or equal to $m$ parts,
and a sign which we denote by
$sign(\la)$. For such a partition Koike and Terada define as the character associated to this partition
$\chara V(\la):=\sign(\la)\chara V(\pi(\la))$.
If $\la$ has less than or equal to $m$ parts, then the sign is one and $\pi(\la)=\la$.

\begin{thm}\cite{KT}
Suppose that $\la$ is a partition having at most $2m$ positive parts.
Then
$$
\chara \big(\res^{GL_{2m}}_{Sp_{2m}}\VV(\la) \big)= \sum_\mu \sum_{(2\nu)^t} N_{\mu,(2\nu)^t}^\la \chara V(\mu)
$$
where the sum is over all nonnegative integer partitions $\mu$ and $\nu$, and
$(2\nu)^t$ denotes the transpose of the partition $2\nu$.
\end{thm}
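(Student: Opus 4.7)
The plan is to reduce the theorem to a universal symmetric-function identity and then establish it through a combination of the Cauchy and Littlewood generating-function identities together with the Littlewood--Richardson rule, handing the specialization to $m$ variables to the Koike--Terada folding mechanism.

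First I would translate both sides into polynomial identities. By the Weyl character formula, $\chara \VV(\la)$ evaluated at the symplectic torus is the Schur polynomial $s_\la(x_1,\ldots,x_m,x_m^{-1},\ldots,x_1^{-1})$, while $\chara V(\mu)$ is the symplectic Schur polynomial $sp_\mu(x_1,\ldots,x_m)$, both ratios of alternants. It therefore suffices to establish
\[ s_\la(x_1,\ldots,x_m,x_m^{-1},\ldots,x_1^{-1}) = \sum_{\mu,\nu} N^\la_{\mu,(2\nu)^t}\, sp_\mu(x_1,\ldots,x_m), \]
where $sp_\mu$ is interpreted via the folding convention $sp_\mu = \sign(\mu)\, sp_{\pi(\mu)}$ when $\ell(\mu)>m$.

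Second, I would prove the identity first at the universal level in the ring $\Lambda$ of symmetric functions in infinitely many variables. The relevant generating-function identities are Littlewood's
\[ \sum_\nu s_{(2\nu)^t}(y) = \prod_{i<j}(1-y_iy_j)^{-1} \]
and the symplectic Cauchy identity
\[ \sum_\rho sp_\rho(x)\, s_\rho(y) = \prod_{i,j}(1-x_iy_j)^{-1}\prod_{i<j}(1-y_iy_j)^{-1}, \]
together with the Littlewood--Richardson expansion $s_\la s_{(2\nu)^t} = \sum_\rho N^\rho_{\la,(2\nu)^t}\, s_\rho$. Combining these kernels and comparing coefficients of $s_\rho(y)$ (respectively using the dual $Sp$--$GL$ pairing on $\Lambda$) yields the universal change-of-basis identity $s_\la = \sum_{\mu,\nu} N^\la_{\mu,(2\nu)^t}\, sp_\mu$, which specializes at the symplectic torus to the desired formula whenever $\ell(\mu)\le m$.

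The third and most delicate step is handling the specialization when some partitions $\mu$ appearing in the sum satisfy $\ell(\mu)>m$. Here the alternating symplectic Weyl denominator forces cancellations encoded in the Koike--Terada folding algorithm: such ``illegal'' partitions either appear in sign-reversed pairs that annihilate or reflect under the Weyl group of type $C_m$ to a dominant $\pi(\mu)$, picking up a sign $\sign(\mu)$ in the process. The main obstacle is precisely this folding step: while the universal character identity is transparent from the Littlewood--Richardson combinatorics, the descent to $m$ variables requires careful row/column manipulations of the bideterminantal formula for $sp_\mu$ in order to match the modification rule and sign conventions with the Weyl-antisymmetrization of the numerator determinant, and this is the bulk of the technical argument in \cite{KT}.
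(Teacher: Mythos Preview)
The paper does not give its own proof of this statement: the theorem is quoted verbatim from Koike--Terada \cite{KT} and used as a black box to pass from the $GL_{2m}$-decomposition of $A^j_{[2]}(\msp_{2m};k)$ to the $Sp_{2m}$-character in Theorem~\ref{sp2mgrch}. So there is nothing in the paper to compare your argument against; any correct proof sketch is automatically ``different'' from what the authors do, which is simply to cite the result.

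That said, your outline is a faithful summary of the actual argument in \cite{KT} (and of Littlewood's original treatment in the stable range). The universal identity $s_\la=\sum_{\mu,\nu} N^\la_{\mu,(2\nu)^t}\, sp_\mu$ in $\Lambda$ follows exactly as you indicate from the Littlewood expansion of $\prod_{i<j}(1-y_iy_j)^{-1}$, the Cauchy/symplectic-Cauchy kernels, and the Littlewood--Richardson rule; and the descent to $m$ variables with the sign convention $\chara V(\mu)=\sign(\mu)\chara V(\pi(\mu))$ is precisely the content of the modification (folding) rules in \cite{KT}, \S2.4. One small caveat: the symplectic Cauchy identity you quote should carry the factor $\prod_{i<j}(1-y_iy_j)$ rather than its reciprocal (equivalently, the kernel in the $x$-variables is $\prod_{i,j}(1-x_iy_j)^{-1}(1-x_i^{-1}y_j)^{-1}$); with that correction the coefficient extraction goes through as you describe.
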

As a consequence we get:
\begin{thm} \label{sp2mgrch}
The $j$-th graded component of the $C_2$-algebra
$A_{[2]}(\msp_{2m};k)=\bigoplus_{j\ge 0 }A^j_{[2]}(\msp_{2m};k)$
decomposes as $Sp_{2m}$-module as follows:
\begin{eqnarray*}
\chara A^j_{[2]}(\msp_{2m};k) &=&
\sum_{\substack{\la=2(a_1\om_1+\ldots+a_{2m-1}\om_{2m-1})\\ a_1+\ldots + a_{2m}\le k\\
a_1+2a_2+3a_3+\ldots + 2ma_{2m}+j=2mk}}
\chara(\res^{GL_{2m}}_{Sp_{2m}} \VV(\la))\\
&=&\sum_{\substack{\la=2(a_1\om_1+\ldots+a_{2m-1}\om_{2m-1})\\ a_1+\ldots + a_{2m}\le k\\
a_1+2a_2+3a_3+\ldots + 2ma_{2m}+j=2mk}}
\sum_\mu \sum_{(2\nu)^t} N_{\mu,(2\nu)^t}^\la \chara V(\mu)
%\bigoplus_{\mu\le \la}\,(\sum_{2\nu \le \la} N_{\mu,(2\nu)^t}^\la) V(\mu).
\end{eqnarray*}
\end{thm}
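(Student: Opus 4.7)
The plan is to obtain the stated character formula by combining the $GL_{2m}$-decomposition in the preceding Corollary with the Koike--Terada branching rule, after verifying that the one-dimensional $GL_{2m}$ factor $\bc_{2(a_{2m}-k)\om_{2m}}$ disappears upon restriction to $Sp_{2m}$.

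First I would invoke the Corollary, which gives a decomposition of $A^j_{[2]}(\msp_{2m};k)$ as a $\gl_{2m}$-module into summands of the form $\VV(\la)\otimes\bc_{2(a_{2m}-k)\om_{2m}}$ subject to the weight and degree constraints. The character of $A^j_{[2]}(\msp_{2m};k)$ as an $Sp_{2m}$-module is then obtained by taking the $Sp_{2m}$-character of each summand, where the $Sp_{2m}$-action is the one inherited from the embedding (\ref{spembedd}) of $\msp_{2m}$ into $\mgl_{2m}\subset\msp_{4m}$. That this indeed matches the adjoint $Sp_{2m}$-action on $A_{[2]}(\msp_{2m};k)$ is exactly the content of Lemma \ref{representglsp} (combined with Lemma \ref{charactersame}, which reconciles the embedding (\ref{spembedd}) with the diagonal embedding (\ref{deltaspembedd})).

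Next I would observe that $\om_{2m}$ is (up to the identification of fundamental weights of $GL_{2m}$ with the standard characters of the diagonal torus) the determinant character of $GL_{2m}$. Since $Sp_{2m}\subset SL_{2m}$, the determinant is trivial on $Sp_{2m}$, so the one-dimensional factor $\bc_{2(a_{2m}-k)\om_{2m}}$ restricts to the trivial $Sp_{2m}$-module. Hence
\[
\chara\bigl(\res^{GL_{2m}}_{Sp_{2m}}(\VV(\la)\otimes\bc_{2(a_{2m}-k)\om_{2m}})\bigr)
=\chara\bigl(\res^{GL_{2m}}_{Sp_{2m}}\VV(\la)\bigr),
\]
which produces the first equality in the statement.

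Finally, I would apply the Koike--Terada restriction theorem (stated just above) to each $\res^{GL_{2m}}_{Sp_{2m}}\VV(\la)$: since $\la$ has at most $2m$ parts by construction, this yields
\[
\chara\bigl(\res^{GL_{2m}}_{Sp_{2m}}\VV(\la)\bigr)=\sum_\mu\sum_{(2\nu)^t} N^\la_{\mu,(2\nu)^t}\,\chara V(\mu),
\]
and summing over the admissible $\la$ gives the second equality. No step requires a serious obstacle to overcome; the only delicate point is the identification of the two $Sp_{2m}$-structures on $A_{[2]}(\msp_{2m};k)$, but this has already been established, so the proof reduces to a direct substitution of the branching formula into the Corollary.
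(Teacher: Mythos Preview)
Your proposal is correct and follows exactly the route the paper intends: the paper itself writes only ``As a consequence we get'' and a \qed, so the argument is precisely to restrict the $GL_{2m}$-decomposition of the Corollary to $Sp_{2m}$ (using that the determinant character is trivial on $Sp_{2m}$) and then plug in the Koike--Terada formula. One minor remark: your invocation of Lemma~\ref{charactersame} is superfluous here, since that lemma compares the Levi embedding with the diagonal embedding into $Sp_{4m}$ and is only needed in Section~\ref{sympcompar}; for the present theorem the compatibility of the adjoint $\msp_{2m}$-action with the restriction of the $\mgl_{2m}$-action already follows from Lemma~\ref{fnisomorphism} and Lemma~\ref{representglsp}.
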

%\proof
%The center $Z$ of $GL_{2m}$, $Z:=\{t.\eins_{2m}\mid t\in\bc^*\}$, acts on $\msp_{2m}=S^2(\bc^2)$ by %$t^2$ and
%hence on $S^j(\msp_{2m})$ by $t^{2j}$. The action of the center can be read off the Young diagram of %the
%representation respectively the partition, it is the number of boxes of the diagram respectively the sum of %the parts.
%Hence we get the grading formula above as a consequence of Proposition~\ref{Gldecomp} and the %restriction formula.
\qed

\section{Comparing dimensions in type $C_m$}\label{sympcompar}
\begin{thm}\label{dimensioncompare}
The $C_2$-algebra $A_{[2]}(\msp_{2m};k)$ and Zhu's algebra $A(\msp_{2m};k)$ have the same
dimension.
\end{thm}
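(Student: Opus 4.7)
The surjection (\ref{ge}) immediately gives $\dim A_{[2]}(\msp_{2m};k)\ge \dim A(\msp_{2m};k)$, so only the opposite inequality requires proof. By Lemma~\ref{representglsp} the left-hand side equals $\dim \VVV(k\om_{2m})$, and by (\ref{sptheorem}) the right-hand side equals $\sum_{\la\in P^+_k(\msp_{2m})}(\dim V(\la))^2$. Thus the theorem reduces to the numerical identity $\dim \VVV(k\om_{2m})=\sum_{\la\in P^+_k(\msp_{2m})}(\dim V(\la))^2$.

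My plan is to establish this identity by proving the branching rule
\[
\res^{Sp_{4m}}_{Sp_{2m}\times Sp_{2m}}\VVV(k\om_{2m})\ \simeq\ \bigoplus_{\la\in P^+_k(\msp_{2m})} V(\la)\boxtimes V(\la),
\]
for the Levi-type sub-Lie-algebra $\msp_{2m}\oplus\msp_{2m}\subset\msp_{4m}$ exhibited in the excerpt, and then taking dimensions. Combined with Lemma~\ref{charactersame}, the branching additionally upgrades the dimension equality to an $\msp_{2m}$-module isomorphism $A_{[2]}(\msp_{2m};k)\simeq A(\msp_{2m};k)$ --- i.e., the surjection (\ref{ge}) is an isomorphism --- since restriction along the diagonal $\Delta\msp_{2m}$ converts each summand $V(\la)\boxtimes V(\la)$ into the tensor-product summand $V(\la)\otimes V(\la)$ appearing in (\ref{sptheorem}).

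To prove the branching rule, I would start from the $GL_{2m}$-decomposition of Proposition~\ref{Gldecomp}. Since $\msp_{2m}\subset\mgl_{2m}$ acts trivially on the central twist $\bc_{2(a_{2m}-k)\om_{2m}}$, one applies the Koike-Terada restriction formula already used in Theorem~\ref{sp2mgrch} and obtains an explicit expansion of $\chara(\res^{\msp_{4m}}_{\msp_{2m}}\VVV(k\om_{2m}))$ in symplectic characters. By Lemma~\ref{charactersame}, this $\msp_{2m}$-character coincides with the character obtained via the diagonal $\Delta\msp_{2m}\subset\msp_{2m}\oplus\msp_{2m}$, so if the branching has the conjectured diagonal form its character must equal $\sum_{\la\in P^+_k(\msp_{2m})}(\chara V(\la))^2$. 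Identifying the two expressions will be achieved via a Cauchy-type identity for $Sp_{2m}$-characters that converts the Littlewood-Richardson/even-partition sum on the Koike-Terada side into the diagonal square sum.

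The main obstacle I expect is the combinatorial bookkeeping in this last step: the Koike-Terada expansion produces sums over transposes $(2\nu)^t$ of even partitions together with Littlewood-Richardson coefficients $N^{\la}_{\mu,(2\nu)^t}$, and collapsing these into the diagonal sum indexed by $P^+_k(\msp_{2m})$ requires either an appropriate symplectic Cauchy identity or an explicit combinatorial bijection on Young tableaux. As a fallback, if matching the full bi-module structure proves too delicate, one can bypass the branching entirely and instead verify the pure dimension identity $\sum_{\sum a_i\le k}\dim\VV(2(a_1\om_1+\cdots+a_{2m-1}\om_{2m-1}))=\sum_{\la\in P^+_k(\msp_{2m})}(\dim V(\la))^2$ directly from Proposition~\ref{Gldecomp} by a Weyl-dimension calculation; this still settles Theorem~\ref{dimensioncompare}, though it loses the additional module-theoretic content.
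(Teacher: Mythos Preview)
Your overall architecture matches the paper's exactly: reduce via Lemma~\ref{representglsp} and (\ref{sptheorem}) to the branching
\[
\res^{Sp_{4m}}_{Sp_{2m}\times Sp_{2m}}\VVV(k\om_{2m})\simeq\bigoplus_{\la\in P^+_k(\msp_{2m})}V(\la)\boxtimes V(\la),
\]
and this is precisely the content of the paper's Corollary~4.3. You also correctly invoke Lemma~\ref{charactersame} to pass between the Levi embedding and the diagonal one.

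Where you diverge is in \emph{how} to prove the branching. You propose to start from the $GL_{2m}$-decomposition of Proposition~\ref{Gldecomp}, push down to $Sp_{2m}$ via Koike--Terada, and then match the resulting character sum against $\sum_\la(\chara V(\la))^2$ by means of an unspecified ``Cauchy-type identity''. This is where the proposal has a genuine gap. First, knowing only the $\Delta(\msp_{2m})$-character cannot by itself determine the $\msp_{2m}\oplus\msp_{2m}$-structure, so at best this route proves the dimension (or $\msp_{2m}$-module) statement, not the full branching you claim. Second, and more seriously, the character identity you need,
\[
\sum_{\substack{\la=2(a_1\om_1+\cdots+a_{2m-1}\om_{2m-1})\\ a_1+\cdots+a_{2m}\le k}}\ \sum_{\mu,\nu}N^{\la}_{\mu,(2\nu)^t}\,\chara V(\mu)\;=\;\sum_{\beta\in P^+_k(\msp_{2m})}(\chara V(\beta))^2,
\]
is not a standard Cauchy identity; unwinding both sides requires the Newell--Littlewood description of $Sp_{2m}$ tensor products together with a delicate range-of-summation argument, and you give no indication of how to carry this out. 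The fallback Weyl-dimension calculation is likewise only asserted, not performed.

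The paper avoids all of this combinatorics by a geometric argument. It shows that $G=Sp_{2m}\times Sp_{2m}$ acts \emph{spherically} on $Y=Sp_{4m}/P_{2m}$ (via the local structure theorem of Brion--Luna--Vust, reducing to the spherical $GL_m\times GL_m$-action on $\bc^m\otimes\bc^m$). Since $\bc[\Y]$ is a UFD, the $U$-invariant ring $\bc[\Y]^U$ is then a polynomial ring; a Krull-dimension count and an explicit check of the $G$-decomposition of $\VVV(\om_{2m})$ identify its generators as the degree~$1$ highest weight vectors of weights $\om_i\otimes\om_i$, $i=0,\ldots,m$. The branching rule, and hence the theorem, drops out immediately. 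This approach is structurally cleaner and gives the full $\msp_{2m}\oplus\msp_{2m}$-decomposition for free, whereas your combinatorial route (even if completed) would only yield the restriction to the diagonal.
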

\proof
The proof is divided into several steps.

{\it Step 1:} We transform the problem into a restriction problem for another group.
By Lemma~\ref{representglsp}, we have an isomorphism of $Sp_{2m}$-modules:
$$
A_{[2]}(\msp_{2m};k)\simeq\res^{Sp_{4m}}_{Sp_{2m}}\VVV(k\om_{2m}),
$$
where the embedding $Sp_{2m}\hookrightarrow Sp_{4m}$ is given by (\ref{spembedd}).
By Lemma~\ref{charactersame} we know that
$$
\res^{\mathfrak{sp}_{4m}}_{\mathfrak{sp}_{2m}}\VVV(k\om_{2m})\simeq
\res^{\mathfrak{sp}_{4m}}_{\Delta(\mathfrak{sp}_{2m})}\VVV(k\om_{2m})
$$
where the second embedding was described in (\ref{deltaspembedd}).
This second embedding has the advantage that by restricting
the representation first to the maximal semisimple Lie subalgebra
$\msp_{2m}\oplus \msp_{2m}$, we get already a description
of $\VVV(k\om_{2m})$ as a sum of tensor products of $\msp_{2m}$-modules,
which will simplify the comparison with the decomposition of the Zhu algebra in Theorem~\ref{zhualgebra}.
\vskip 3pt
{\it Step 2:}  It remains to determine the $Sp_{2m}\times Sp_{2m}$-structure of the representation
$\res_{Sp_{2m}\times Sp_{2m}}^{Sp_{4m}} \VVV(k\omega_{2m})$. We use geometric methods:
let $P_{2m}\subset Sp_{4m}$ be the maximal parabolic subgroup
associated to the weight $\omega_{2m}$ and set 
$Y=Sp_{4m}/P_{2m}\subset{\mathbb{P}(\VVV(\omega_{2m}))}$.
We denote by $\Y\subset \VVV(\omega_{2m})$ the affine cone over the projective variety.

The group $Sp_{4m}$ acts on the affine variety $\Y$ and hence on its coordinate ring $\bc[\Y]$. As
$Sp_{4m}$-module, this ring is the direct sum:
$$
\bc[\Y]=\bigoplus_{\ell\ge 0} \VVV(\ell\omega_{2m}).
$$
The ring $\bc[\Y]$ is naturally graded with $\VVV(\ell\omega_{2m})$ as
$\ell$-th graded component.

Let $U\subset G=Sp_{2m}\times Sp_{2m}$ be the unipotent radical
of a Borel subgroup. In a representation of $G$ the $U$-fixed vectors are sums
of highest weight vectors. The ring $\bc[\Y]^U$ of $U$-invariant vectors
completely determines the structure of $\bc[\Y]$ as $G$-representation.

\begin{prop}\label{uinvariant}
The ring of $U$-invariant functions
$$
\bc[\Y]^U=\bigoplus_{k\ge 0} \VVV(k\omega_{2m})^U
$$
is a polynomial ring generated by its degree 1 elements of weight
$\omega_i\otimes\omega_i$ for $i=0,\ldots,m$, where $\om_0$ denotes the trivial weight.
\end{prop}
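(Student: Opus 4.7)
The plan is to combine the $G$-sphericity of $\Y$ (with $G := Sp_{2m}\times Sp_{2m}$), an explicit branching rule for $\VVV(\ell\om_{2m})$ as a $G$-module, and the irreducibility of $\Y$, in order to conclude that $\bc[\Y]^U$ is freely generated by $m+1$ specified degree-one elements.

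First I would note that $(\msp_{4m},\msp_{2m}\oplus\msp_{2m})$ is a symmetric pair (of type CII, with restricted root system of type $C_m$). A classical theorem on parabolic homogeneous spaces of symmetric spaces then implies that $Y = Sp_{4m}/P_{2m}$ is $G$-spherical, and so is the affine cone $\Y$. Hence
$$
\bc[\Y] \;=\; \bigoplus_{\ell\ge 0}\VVV(\ell\om_{2m})
$$
is multiplicity-free as a $G$-module, so every weight space of $\bc[\Y]^U$ is at most one-dimensional, and a transcendence-degree count (using $\dim \Y = 2m^2+m+1$ and $\dim U = 2m^2$ together with triviality of generic $U$-stabilizers forced by sphericity) gives $\trdeg\bc[\Y]^U = m+1$.

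Next I would identify the $G$-highest weights that actually occur. Using the Littlewood/Koike-Terada branching rules for the embedding $Sp_{2m}\times Sp_{2m} \hookrightarrow Sp_{4m}$, one obtains
$$
\res^{Sp_{4m}}_{G}\VVV(\ell\om_{2m}) \;=\; \bigoplus_{\la}V(\la)\otimes V(\la),
$$
where the sum runs over dominant $\msp_{2m}$-weights $\la = b_1\om_1+\cdots+b_m\om_m$ satisfying $b_1+\cdots+b_m \le \ell$. In degree $\ell=1$ the summands are precisely $V(\om_i)\otimes V(\om_i)$ for $i = 0,\ldots,m$ (with $\om_0 := 0$), so $\VVV(\om_{2m})^U$ has a basis of $m+1$ weight vectors $f_0,\ldots,f_m$ of the claimed weights $\om_i\otimes\om_i$.

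To finish, since $\Y$ is irreducible, $\bc[\Y]$ is an integral domain, and every monomial $f_0^{b_0} f_1^{b_1}\cdots f_m^{b_m}$ is therefore a nonzero $U$-invariant of cone-degree $\sum_i b_i$ and weight $\sum_{i=1}^{m} b_i(\om_i\otimes\om_i)$. Comparing with the branching formula above shows that these monomials exhaust the $U$-weight spaces of $\bc[\Y]^U$ with multiplicity one each; combined with the multiplicity-freeness from the first step, they must form a linear basis. This proves $\bc[\Y]^U = \bc[f_0,\ldots,f_m]$ is polynomial on the stated generators. The main obstacle I anticipate is pinning down the branching formula in the precise form $V(\la)\otimes V(\la)$ with both tensor factors equal; this symmetry is the key consequence of the symmetric-pair structure of $(\msp_{4m},\msp_{2m}\oplus\msp_{2m})$, and must be extracted either from that structural input or from a direct computation with the Koike-Terada symmetric-function identities.
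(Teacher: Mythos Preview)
Your argument has a circularity problem relative to the paper's logical structure. The full branching formula
\[
\res^{Sp_{4m}}_{Sp_{2m}\times Sp_{2m}}\VVV(\ell\om_{2m})=\bigoplus_{\sum a_i\le \ell}V(\la)\otimes V(\la)
\]
that you invoke is exactly Corollary~\ref{zerlegspsp}, and in the paper it is \emph{derived from} Proposition~\ref{uinvariant}, not used as input to prove it. Your citation of Littlewood/Koike--Terada for this restriction is also off: their rules concern $GL_n\downarrow Sp_n$ and $GL_n\downarrow O_n$, not $Sp_{4m}\downarrow Sp_{2m}\times Sp_{2m}$. You flag this yourself as the main obstacle, and rightly so: supplying that formula independently is essentially the whole content of what you are trying to prove.

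The paper avoids this by needing only the $\ell=1$ branching, which it obtains by an elementary computation from $\chara\VVV(\om_p)=\chara\Lambda^p(\bc^{4m})-\chara\Lambda^{p-2}(\bc^{4m})$ and the splitting $\bc^{4m}=\bc^{2m}\oplus\bc^{2m}$; this already gives the $m+1$ degree-one highest weight vectors $f_0,\ldots,f_m$ of weights $\om_i\otimes\om_i$. The structural input is then: sphericity of $G=Sp_{2m}\times Sp_{2m}$ on $Y$ (proved via the Brion--Luna--Vust local structure theorem, reducing to the spherical $GL_m\times GL_m$-action on $\bc^m\otimes\bc^m$) plus the UFD property of $\bc[\Y]$ force $\bc[\Y]^U$ to be a polynomial ring. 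Algebraic independence of $f_0,\ldots,f_m$ follows from their distinct degree--weight pairs, and generation follows from the Krull-dimension count $\dim\bc[\Y]^U=m+1$ (Rosenlicht's theorem plus a free $U$-action on an open subset). Your transcendence-degree count is essentially this last step, but the paper's route never needs the branching for $\ell>1$.

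A smaller issue: your sphericity argument (``a classical theorem on parabolic homogeneous spaces of symmetric spaces'') is too vague as stated. It is not true in general that a symmetric subgroup $H\subset K$ acts spherically on an arbitrary $K/P$; the paper's local-structure-theorem argument is doing genuine work here.
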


The proof will be given in Step 4. As an immediate consequence we get:
\begin{cor}\label{zerlegspsp}
$$
(\res_{Sp_{2m}\times Sp_{2m}}^{Sp_{4m}} \VVV(k\omega_{2m}))=
\bigoplus_{\substack{\la=\sum_{i=1}^m a_i\om_i\\ \sum a_i\le k}} V(\la)\otimes V(\la).
$$
\end{cor}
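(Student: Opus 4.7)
The plan is to read off the corollary directly from Proposition~\ref{uinvariant} by counting $U$-invariants of the correct weight in each graded piece. Recall that $\bc[\Y]$ carries the natural grading with $\VVV(k\om_{2m})$ in degree $k$, and for any rational $G$-module the dimension of the space of $U$-invariants of weight $\mu$ equals the multiplicity of the irreducible $G$-module of highest weight $\mu$. So it is enough to decompose the degree $k$ component of $\bc[\Y]^U$ as a weight module for the maximal torus of $G=Sp_{2m}\times Sp_{2m}$.

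First I would use Proposition~\ref{uinvariant} to identify $\bc[\Y]^U$ with a polynomial algebra $\bc[y_0,y_1,\dots,y_m]$, where the generator $y_i$ has degree $1$ in the grading coming from $\bc[\Y]$ and weight $\om_i\otimes\om_i$ under $G$ (with $\om_0=0$). Then a basis of the degree $k$ piece is given by monomials $y_0^{a_0}y_1^{a_1}\cdots y_m^{a_m}$ with $a_0+a_1+\cdots+a_m=k$ and $a_i\ge 0$, each of which is a highest weight vector of weight $(\la,\la)$ where $\la=\sum_{i=1}^m a_i\om_i$. Eliminating the auxiliary variable $a_0=k-\sum_{i=1}^m a_i\ge 0$ turns the condition $\sum_{i=0}^m a_i=k$ into $\sum_{i=1}^m a_i\le k$, and the correspondence between tuples $(a_1,\dots,a_m)$ and dominant weights $\la=\sum_{i=1}^m a_i\om_i$ is a bijection.

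Finally, since each such weight $(\la,\la)$ appears exactly once among the monomials, the multiplicity of $V(\la)\otimes V(\la)$ in $\res^{Sp_{4m}}_{Sp_{2m}\times Sp_{2m}}\VVV(k\om_{2m})$ is $1$, and no other highest weight occurs. This gives
\[
\res_{Sp_{2m}\times Sp_{2m}}^{Sp_{4m}} \VVV(k\om_{2m})=\bigoplus_{\substack{\la=\sum_{i=1}^m a_i\om_i\\ \sum a_i\le k}} V(\la)\otimes V(\la),
\]
as claimed. There is no real obstacle here once Proposition~\ref{uinvariant} is granted; the only subtle point is keeping track of the two gradings (the polynomial degree, which corresponds to $k$, and the weight grading, which corresponds to $\la$), and in particular remembering that the trivial generator $y_0$ of weight $\om_0\otimes\om_0$ contributes to the degree without contributing to $\la$, which is exactly what produces the inequality $\sum a_i\le k$ rather than an equality.
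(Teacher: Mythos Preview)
Your argument is correct and is exactly the unpacking the paper has in mind: the corollary is stated as an ``immediate consequence'' of Proposition~\ref{uinvariant}, and the passage from the polynomial generators $y_0,\dots,y_m$ in degree $1$ with weights $\om_i\otimes\om_i$ to the multiplicity-free decomposition indexed by $\sum a_i\le k$ is precisely the monomial count you wrote down. There is nothing to add.
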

\vskip 3pt
{\it Step 3:} {\it Proof of Theorem~\ref{dimensioncompare}}.
The theorem follows now from Corollary~\ref{zerlegspsp} together with Lemma~\ref{representglsp},
Theorem~\ref{zhualgebra} and $(\ref{sptheorem})$.
\qed
\vskip 3pt
{\it Step 4:} It remains to prove Proposition~\ref{uinvariant}. A first step in this
direction is the following:
\begin{prop}
The action of $G=Sp_{2m}\times Sp_{2m}$ on $Y=Sp_{4m}/P_{2m}$ is spherical, i.e.
a Borel subgroup of $G$ has a dense orbit in $Y$.
\end{prop}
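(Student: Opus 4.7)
The plan is to exhibit the open $G$-orbit on $Y$ explicitly and reduce the sphericity question to the classical ``group case''. First, I would identify $Y=Sp_{4m}/P_{2m}$ with the Lagrangian Grassmannian $LG(2m,4m)$ and, reading off the matrix description in (\ref{spembedd}), decompose the ambient space as $\bc^{4m}=W_1\oplus W_2$, an orthogonal direct sum of two symplectic subspaces of dimension $2m$: the first factor of $G$ acts on $W_1=\mathrm{span}(e_1,\ldots,e_m,e_{3m+1},\ldots,e_{4m})$ and trivially on $W_2$, while the second factor acts on $W_2=\mathrm{span}(e_{m+1},\ldots,e_{3m})$ and trivially on $W_1$.

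Next, I would consider the Zariski open subset $Y^\circ\subset Y$ consisting of those Lagrangians $V$ whose projections to $W_1$ and to $W_2$ are both isomorphisms. Every such $V$ is the graph of a linear isomorphism $\varphi:W_1\to W_2$, and the Lagrangian condition on $V$ translates into the requirement $\varphi^*\omega_{W_2}=-\omega_{W_1}$, i.e.\ $\varphi$ is an anti-symplectic isomorphism. The set of such $\varphi$ forms a single $G$-orbit under $(g_1,g_2)\cdot\varphi=g_2\varphi g_1^{-1}$, and the stabilizer of any fixed $\varphi_0$ is the twisted diagonal $\{(g,\varphi_0 g\varphi_0^{-1}):g\in Sp_{2m}\}\cong Sp_{2m}$; a suitable choice of $\varphi_0$ identifies this stabilizer with the embedding $\Delta(\msp_{2m})$ of (\ref{deltaspembedd}). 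Since $\dim G-\dim\Delta=m(2m+1)=\dim Y$, the orbit $Y^\circ$ is open in $Y$.

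Finally, $Y^\circ\simeq G/\Delta(Sp_{2m})$ is, as a $G$-variety, isomorphic to $Sp_{2m}$ with $G=Sp_{2m}\times Sp_{2m}$ acting by left and twisted right translation. Its sphericity is then immediate from the Bruhat decomposition $Sp_{2m}=\bigsqcup_{w\in W}BwB$: the big cell $Bw_0B$ is a dense orbit for the Borel subgroup $B\times B\subset G$ (possibly after composing with the twist by $\varphi_0$). Pulling this dense orbit back to $Y^\circ\subset Y$ yields the required dense $B_G$-orbit on $Y$.

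The main technical step is matching the generic stabilizer on $Y^\circ$ with the precise twisted diagonal of (\ref{deltaspembedd}), which amounts to choosing $\varphi_0$ compatibly with the skew form $J$ fixed in the paper; the sphericity itself is classical once the reduction to the group case is made.
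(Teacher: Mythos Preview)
Your argument is correct and takes a genuinely different route from the paper's. The paper proves sphericity via the local structure theorem of Brion--Luna--Vust: it identifies the $G$-stabilizer of $\overline{1}\in Y$ as $R=P_m\times P_m$, applies the local structure theorem to obtain an $L$-stable affine slice $Z$ with $L=GL_m\times GL_m$, and then uses Luna's slice theorem to reduce sphericity of $Y$ to sphericity of the $L$-module $N=\bc^m\otimes\bc^m$ (the normal space to the closed orbit), which is classical. You instead exhibit the open $G$-orbit $Y^\circ$ globally as the Lagrangians transverse to both $W_1$ and $W_2$, identify $Y^\circ$ with $Sp_{2m}$ under two-sided translation, and invoke the Bruhat decomposition. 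Your approach is more elementary (no local structure theorem, no slice theorem) and has the pleasant side effect of producing the generic stabilizer $\Delta(Sp_{2m})$ explicitly, making the link with Lemma~\ref{charactersame} transparent. The paper's approach, on the other hand, is more uniform---the same local picture $L=GL_m\times GL_m$ acting on $\bc^m\otimes\bc^m$ is reused in the proof of Lemma~\ref{dimensiospuinvariant} to show that $U$ acts freely on an open subset of $Y$, and the method carries over verbatim to the orthogonal case in Section~5. Your identification of the stabilizer with the specific embedding (\ref{deltaspembedd}) is not needed for the sphericity statement itself (any twisted diagonal suffices for Bruhat), so the ``main technical step'' you flag is in fact optional here.
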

\proof
We use the local structure theorem \cite{BLV}, which states the following:

Let $G$ be a connected complex reductive algebraic group.
Suppose $Y$ is a normal $G$-variety and $y\in Y$ is such that
the stabilizer $G_y$ of $y$ is a parabolic subgroup of $G$, i.e. the
orbit $G.y$ is a projective variety. Let $Q$ be a parabolic subgroup
opposite to $G_y$. Denote by $Q^u$ the
unipotent radical of $Q$ and set $L:=G_y\cap Q$.
\begin{thm}\cite{BLV}
There exists a locally closed, affine subvariety $Z$ of $Y$ such that $Z$
contains $y$ and is stable
under the action of $L$, $Q^u.Z$ is open in $Y$, and the canonical
map $Q^u\times Z\rightarrow Q^u.Z$ is an isomorphism of varieties.
\end{thm}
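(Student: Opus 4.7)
The plan is to construct the slice $Z$ as a transversal to the $Q^u$-orbit through $y$, using the representation-theoretic structure at $y$, and then deduce the open immersion statement from equivariance together with a tangent-space computation. I would proceed in four steps.

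First, I would reduce to an affine setting. Because $G_y$ is parabolic, the orbit $G\cdot y \simeq G/G_y$ is projective, hence closed in $Y$. Normality of $Y$ lets one invoke Sumihiro's theorem to produce a $G$-stable quasi-projective open neighborhood of $G\cdot y$. Choosing a $G$-linearized very ample line bundle on this neighborhood and a $Q$-semi-invariant section $f$ with $f(y)\neq 0$, I pass to the $Q$-stable affine open $Y_0=\{f\neq 0\}$ containing $y$.

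Second, I would analyze the local geometry on the closed orbit. Since $Q$ and $G_y$ are opposite parabolics with common Levi $L=Q\cap G_y$, multiplication yields an open immersion $Q^u\times G_y\hookrightarrow G$ onto the big cell; quotienting, the orbit map $Q^u\to G\cdot y$, $u\mapsto u\cdot y$, is an open immersion. In particular $L$ fixes $y$, and the tangent space $T_yY$ decomposes $L$-equivariantly as $T_y(Q^u\cdot y)\oplus N$ for some $L$-stable complement $N$.

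Third, I would construct the slice $Z$. The reductive group $L$ acts on the affine $L$-variety $Y_0$ fixing $y$, so Luna's \'etale slice theorem applied at $y$ produces a locally closed $L$-stable affine subvariety $Z\ni y$ with $T_yZ=N$; in particular $T_yZ\oplus T_y(Q^u\cdot y)=T_yY$. Fourth, I would show that the action map $\varphi\colon Q^u\times Z\to Y$ is an open immersion onto $Q^u\cdot Z$. Its differential at $(1,y)$ is an isomorphism by the direct-sum decomposition, and $Q^u$-equivariance propagates this to an \'etale map on a $Q^u$-stable open neighborhood of $\{1\}\times Z$. Since $Q^u$ is unipotent and therefore simply connected with no nontrivial connected \'etale covers, it suffices to verify injectivity: two preimages $(u_1,z_1),(u_2,z_2)$ with common image give $u_2^{-1}u_1\cdot z_1=z_2$, and the freeness of the $Q^u$-action on the big cell of $G\cdot y$ together with the transversality of $Z$ forces $u_1=u_2$ and $z_1=z_2$.

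The main obstacle will be the third step. Luna's slice theorem produces $Z$ only up to shrinking, so one still has to arrange that $Q^u\cdot Z$ is open in all of $Y$ rather than only in a neighborhood of a single point of $G\cdot y$. This is where the $Q$-semi-invariant $f$ from step one does the real work: it must be chosen so that $Y_0$ meets the closed orbit precisely in its big cell $Q^u\cdot y$, ensuring that the local slice obtained from Luna is already globally transversal to the $Q^u$-orbits. Constructing such an invariant intrinsically, independently of any particular projective embedding, is the technical heart of the argument.
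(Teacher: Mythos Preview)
The paper does not prove this statement: it is the Local Structure Theorem, quoted from \cite{BLV} and invoked as a black box in the argument that $Sp_{2m}\times Sp_{2m}$ acts spherically on $Sp_{4m}/P_{2m}$. There is no proof in the paper to compare your proposal against.

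Regarding the proposal itself, the injectivity argument in step~4 is where it breaks. From $u_2^{-1}u_1\cdot z_1=z_2$ you appeal to ``freeness of the $Q^u$-action on the big cell of $G\cdot y$'' and ``transversality of $Z$'', but $z_1$ need not lie on the closed orbit $G\cdot y$, so freeness there is irrelevant, and transversality at the single point $y$ is an infinitesimal condition that says nothing about whether a nontrivial element of $Q^u$ can carry one point of $Z$ to another. Luna's slice theorem (step~3) only hands you an \'etale map, and upgrading \'etale to an open immersion genuinely requires a global input that your argument does not supply. The original proof in \cite{BLV} sidesteps this entirely: after passing to the $Q$-stable affine open $Y_0=\{f\neq 0\}$ as in your step~1, one constructs a $Q^u$-equivariant morphism $\pi\colon Y_0\to Q^u$ directly (using, roughly, functions of the form $(X\cdot f)/f$ for $X$ in $\mathfrak q^u$, or equivalently the linear projection onto the $U(\mathfrak q^u)$-span of the highest weight line in the ambient $G$-module), and sets $Z=\pi^{-1}(1)$. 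The isomorphism $Q^u\times Z\xrightarrow{\sim}Y_0$ is then immediate from the equivariance of $\pi$, with no separate \'etale or injectivity step needed.
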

In our situation we have $Y=Sp_{4m}/P_{2m}$, $y=\overline{1}$ is the class of the identity,
$G_y$ is $R=P_m\times P_m$, where $P_m\subset Sp_{2m}$ is the maximal parabolic
associated to the fundamental weight $\omega_m$.
Let $Q$ be the opposite parabolic subgroup, then $L=R\cap Q=GL_m\times GL_m$.

Denote by $Q^u$ the unipotent radical of $Q$ and let
$$
O=G.\overline{1} \simeq (Sp_{2m}\times Sp_{2m})/(P_m\times P_m)=G/R
$$
be the closed orbit in $Y$. If we apply the local structure theorem to this situation,
then we may assume that $Z$ is smooth since $Y$ is smooth. Moreover, the action of
$G$ on $Y$ is spherical if the action of $L$ on $Z$ is spherical. Consider
the normal bundle ${\mathcal N}$ of $O$ in $X$ with fibre
$N$ at the coset $\overline{1}$ of the identity in $G/R$.
Then $N$ is isomorphic (as $L$-module) to the tangent space $T_y Z$ of
the $L$-fixed point $y$. It follows now by Luna's slice theorem that the action
of $L$ on $Z$ is spherical if and only if the action on $N$ is spherical. Now
as representation of $L=GL_m\times GL_m$ we have that $N=\bc^m\otimes \bc^m$, which is
a spherical action.
\qed

Let $\Y$ be the affine cone over $Y$. Since the coordinate ring $\bc[\Y]$ of
the affine cone over $Y=Sp_{4m}/P_{2m}$ is a unique factorization
ring, it follows (see for example \cite{L1}, Lemma 1):
\begin{cor}
The ring of $U$-invariant functions:
$$
\bc[\Y]^U=\bigoplus_{\ell\ge 0} \VVV(\ell\omega_{2m})^U
$$
is a polynomial ring.
\end{cor}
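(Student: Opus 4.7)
The plan is to prove the proposition in two stages, both driven by the spherical geometry of $Y = Sp_{4m}/P_{2m}$ under $G = Sp_{2m}\times Sp_{2m}$.

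First I would handle polynomiality. The local-structure-theorem argument sketched in the excerpt applied at the identity coset $y=\overline{1}\in Y$ identifies $G_y$ with the parabolic $R=P_m\times P_m$, whose Levi $L=GL_m\times GL_m$ acts on the normal fibre $T_yZ$ as on $\bc^m\otimes \bc^m$; this is the classical spherical action on $m\times m$-matrices, so by Luna's slice theorem $Y$ itself is spherical for $G$. Since $\mathrm{Pic}(Sp_{4m}/P_{2m})\simeq\bz$ is generated by the ample class of $\om_{2m}$, the cone $\bc[\Y]$ is a unique factorization domain, and the standard principle that the $U$-invariants of a spherical affine $G$-variety with UFD coordinate ring form a polynomial algebra (\cite{L1}, Lemma~1) yields the first assertion.

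Next I would pin down the generators. A crucial additional input is that $\om_{2m}$ is a cominuscule weight of $\msp_{4m}$, so the embedding $Y\hookrightarrow \mathbb{P}(\VVV(\om_{2m}))$ is projectively normal and $\bc[\Y]$ is generated in degree $1$. In particular $\bc[\Y]^U$ is generated as an algebra by its degree $1$ piece $\VVV(\om_{2m})^U$, so the entire question reduces to decomposing $\VVV(\om_{2m})$ as a $G$-module. I would realise $\VVV(\om_{2m})$ as the kernel of symplectic contraction inside $\Lambda^{2m}\bc^{4m}$ and split $\bc^{4m}=E_1\oplus E_2$ corresponding to the two symplectic factors. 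The decomposition
\[
\Lambda^{2m}(E_1\oplus E_2)=\bigoplus_{p+q=2m}\Lambda^p E_1\otimes\Lambda^q E_2,
\]
together with the $Sp_{2m}$-decomposition $\Lambda^p E_j \simeq V(\om_p)\oplus V(\om_{p-2})\oplus\cdots$, yields after extraction of the primitive (symplectic-trace-free) parts the multiplicity-free splitting
\[
\VVV(\om_{2m})\big|_G\simeq\bigoplus_{i=0}^{m}V(\om_i)\otimes V(\om_i),
\]
with $V(\om_0)=\bc$. Each summand carries a unique $U$-fixed line of weight $\om_i\otimes\om_i$, producing $m+1$ degree-$1$ generators $f_0,\ldots,f_m$ with $\bz$-linearly independent weights, hence algebraically independent inside the polynomial ring $\bc[\Y]^U$. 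Projective normality then forces them to generate all of $\bc[\Y]^U$.

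The main obstacle is the explicit $G$-decomposition of $\VVV(\om_{2m})$: sphericity alone forces multiplicity-freeness and constrains the weight semigroup to be free, but does not pin down which weights appear. A clean route is the Howe-duality analysis for the dual pair $(Sp_{2m}\times Sp_{2m},Sp_{4m})$ inside $\Lambda^\bullet(E_1\oplus E_2)$; an alternative is a direct character computation, restricting through the chain $Sp_{4m}\supset GL_{2m}\supset\Delta(Sp_{2m})$ of Lemma~\ref{charactersame} and then matching against the $\Delta$-restriction of a candidate $Sp_{2m}\times Sp_{2m}$-decomposition. Particular care must be taken to avoid any appeal to Corollary~\ref{zerlegspsp}, which is a formal consequence of this proposition.
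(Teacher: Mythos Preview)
Your first paragraph correctly proves the corollary and matches the paper's argument exactly: sphericity of the $Sp_{2m}\times Sp_{2m}$-action on $Y$ (via the local structure theorem and the spherical $GL_m\times GL_m$-action on $\bc^m\otimes\bc^m$), together with the UFD property of $\bc[\Y]$, feeds into \cite{L1}, Lemma~1, to yield polynomiality of $\bc[\Y]^U$. For the corollary as stated, nothing more is needed.

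The remaining paragraphs go beyond the corollary and attempt the stronger Proposition~\ref{uinvariant} (identifying the generators). That extra argument contains a genuine gap: from projective normality, i.e.\ from $\bc[\Y]$ being generated in degree~$1$, it does \emph{not} follow that $\bc[\Y]^U$ is generated in degree~$1$. The orthogonal analogue in Section~5 is a concrete counterexample inside the same paper: there $\bc[\Y]$ is again the affine cone over a cominuscule flag variety and is generated in degree~$1$, yet Proposition~\ref{uinvariantso} shows that $\bc[\Y]^U$ requires generators in degree~$2$. So your sentence ``In particular $\bc[\Y]^U$ is generated as an algebra by its degree~$1$ piece'' and the concluding ``Projective normality then forces them to generate'' are unjustified.

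The paper closes this gap differently: after exhibiting the $m+1$ algebraically independent degree-$1$ highest weight vectors $f_0,\ldots,f_m$ (your decomposition of $\VVV(\om_{2m})$ is correct and agrees with the paper's), it computes the Krull dimension of $\bc[\Y]^U$ directly. By Rosenlicht's theorem, $\trdg\bc(\Y)^U$ equals the codimension of a generic $U$-orbit in $\Y$, and the local structure theorem shows $U$ acts freely on an open subset, giving Krull dimension $m+1$ (Lemma~\ref{dimensiospuinvariant}). Since $\bc[\Y]^U$ is already known to be a polynomial ring, $m+1$ algebraically independent elements in a polynomial ring of Krull dimension $m+1$ must generate it.
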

{\it Step 5:} To finish the proof of Proposition~\ref{uinvariant} we have to show that the
generators of $\bc[\Y]^U$ are of degree 1 and have the desired weights.

To calculate the $Sp_{2m}\times Sp_{2m}$-character, recall that (see for example \cite{FH}, Theorem 17.5)
\begin{equation}\label{2m}
\chara \VVV(\om_{2m})=\chara \Lambda^{2m}(\bc^{2m}\oplus \bc^{2m}) - \chara\Lambda^{2m-2}(\bc^{2m} \oplus \bc^{2m}).
\end{equation}
More generally, for all $p=1,\dots, 2m$ one has
\begin{equation} \label{p}
\chara \VVV(\om_p)=\chara \Lambda^p(\bc^{2m}\oplus \bc^{2m}) - \chara\Lambda^{p-2}(\bc^{2m} \oplus \bc^{2m}).
\end{equation}
%%%%%%%%%%%%%%%%%%%%%%%%
%%%%%%%%%%%%%%%%%%%%%%%%
%%%%%%%%%%%%%%%%%%%%%%%%
%%%%%%%%%%%%%%%%%%%%%%%%
%%%%%%%%%%%%%%%%%%%%%%%%
Using \eqref{2m} and \eqref{p}, one easily verifies that, as $Sp_{2m}\times Sp_{2m}$-module, we have:
$$
\VVV(\omega_{2m})= \bc\oplus V(\omega_1)\otimes V(\omega_1)\oplus V(\omega_2)\otimes V(\omega_2)\oplus\ldots
\oplus V(\omega_m)\otimes V(\omega_m).
$$
Let $f_0,\ldots,f_m\in \VVV(\omega_{2m})$ be the
highest weight vectors for the $Sp_{2m}\times Sp_{2m}$-action, where $f_0$ is a
$Sp_{2m}\times Sp_{2m}$-invariant function and $f_i$ is of weight $\omega_i\otimes\omega_i$
for $i\ge 1$. Since $\bc[\Y]^U$ is a polynomial ring, the grading and weights imply
that these elements
are algebraically independent. In addition, if these elements do not generate the ring, then
necessarily the Krull dimension $\dim \bc[\Y]^U>m+1$.  So Proposition~\ref{uinvariant}
is a consequence of the following lemma:

\begin{lem}\label{dimensiospuinvariant}
The Krull dimension $\dim \bc[\Y]^U=m+1$.
\end{lem}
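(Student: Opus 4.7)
My approach is to compute the transcendence degree $\trdeg_\bc \bc(\Y)^U$: since $\bc[\Y]^U$ is a polynomial ring by the preceding Corollary and we have already exhibited $m+1$ algebraically independent degree-one elements $f_0,\ldots,f_m$, the equality $\trdeg_\bc \bc(\Y)^U = m+1$ will force $\bc[\Y]^U = \bc[f_0,\ldots,f_m]$ and yield the desired Krull dimension.

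First I would pass from the affine cone to the base. The unipotent group $U\subset G$ acts linearly on $\VVV(\om_{2m})$ and hence commutes with the $\bc^*$-scaling on $\Y$. Consequently, for a generic point $v\in\Y$ mapping to $\bar v\in Y$ we have $\dim(U\cdot v)=\dim(U\cdot\bar v)$, so
$$
\trdeg_\bc\bc(\Y)^U \;=\; \trdeg_\bc\bc(Y)^U + 1,
$$
and it suffices to show $\trdeg_\bc\bc(Y)^U = m$.

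For the latter I would invoke the local structure theorem already used in the proof of the previous Proposition: $Q^u\times Z \to Q^u\cdot Z$ is an isomorphism onto a dense open subset of $Y$, with $Z$ an $L$-stable affine slice through the $L$-fixed point $y$. Choosing a Borel of $G$ compatible with the parabolic $Q$, we have $U = U_L\cdot Q^u$; since $Q^u$ acts on $Q^u\times Z$ only by translation of the first factor, its invariant rational functions are exactly $\bc(Z)$, and a further $U_L$-quotient gives $\bc(Y)^U \simeq \bc(Z)^{U_L}$. By Luna's slice theorem, $Z$ is $L$-equivariantly isomorphic near $y$ to an open neighborhood of the origin in the normal space $N = T_yZ \simeq \bc^m\otimes\bc^m$, on which $L = GL_m\times GL_m$ acts by left–right multiplication of $m\times m$ matrices. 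A classical invariant-theoretic computation identifies $\bc[N]^{U_L}$ with the polynomial ring in the $m$ principal minors of such a matrix, so $\trdeg_\bc\bc(N)^{U_L} = m$, as required.

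The most delicate step will be the set-up of the local structure reduction: one must choose the Borel of $G$ compatibly with the parabolic $Q$ so that $U = U_L\cdot Q^u$, and then verify that the induced $U_L$-action on $Z$ agrees with the $U_L$-action on $N$ under Luna's identification. Once this compatibility is in place, the remaining input about $U_L$-invariants on $M_m(\bc)$ is standard, and the dimension count falls out cleanly.
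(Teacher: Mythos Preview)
Your argument is correct and rests on the same core ingredients as the paper's proof: Rosenlicht's theorem and the local structure reduction to the Levi $L=GL_m\times GL_m$ acting on the slice $N=\bc^m\otimes\bc^m$. The difference lies only in the final step. The paper does not identify $\bc[N]^{U_L}$ explicitly; it simply observes that the maximal unipotent of $GL_m\times GL_m$ acts freely on the open set of invertible matrices in $N$, so by the local structure theorem $U$ acts freely on a dense open subset of $Y$ and hence of $\Y$. Rosenlicht then gives directly
\[
\trdeg\bc(\Y)^U=\dim\Y-\dim U=\bigl(m(2m+1)+1\bigr)-2m^2=m+1,
\]
and the paper concludes by noting that $\bc(\Y)^U=\mathrm{Frac}\,\bc[\Y]^U$ because $\bc[\Y]$ is a UFD and $U$ is connected unipotent. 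Your route via the principal minors is a bit longer (it requires the Luna linearization $Z\leadsto N$ and the classical description of $\bc[M_m]^{U_L}$), but it gives the invariant ring on the slice explicitly and avoids the separate UFD/fraction-field argument, since you only need the inequality $\dim\bc[\Y]^U\le\trdeg\bc(\Y)^U$ together with the $m+1$ independent elements $f_0,\ldots,f_m$ already constructed.
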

\proof
Since $\bc[\Y]$ is a UFD, $U$ is connected and has no non-trivial characters,
$\bc(\Y)^U$ is the quotient field of $\bc[\Y]^U$. By Rosenlicht's theorem \cite{R},
generic orbits of an arbitrary action of a linear algebraic group on an irreducible
algebraic variety are separated by rational invariants, which implies in our case
that $\trdg\bc(\Y)^U=\dim \Y-\dim (\hbox{\rm generic orbit})$.

The maximal unipotent subgroup of $GL_m\times GL_m$ acts freely on an open
subset of $\bc^m\otimes\bc^m$, so the local structure theorem (Step 4) shows that $U$
operates freely on an open subset of $Y$ and hence does so on $\Y$. Since the codimension
of a generic $U$-orbit in $\Y$ is $m+1$, this finishes the proof of the lemma and hence
of Proposition~\ref{uinvariant}.
\qed

\section{The orthogonal case}
In this section we consider the case of orthogonal algebras $\mso_{2m}$
and $\mso_{2m+1}$. Our goal is to answer questions \eqref{dimeq} and
\eqref{grch} from Section \ref{setup}. Unfortunately, we are not
able at the moment to answer these questions completely.
The reason is that the $C_2$-algebra
$A_{[2]}(\mso_n;k)$ is bigger than the representation $V(k\omega_n)$
of $\mso_{2n}$. Therefore we can control only a certain quotient of the
$C_2$-algebra. The details are given below.

%In the following section we consider the even orthogonal case.
%{\it We begin as in the symplectic case, but the arguments need to
%be adapted. Actually, at the moment I don't know exactly yet how to do it.
%But let's first see how far we get. I don't remember whether this is Bourbaki
%convention, but I use for the $D_n$ case
%$\om_{m-1}=\frac{1}{2}(\epsilon_1+\ldots+\epsilon_{n-1}-\epsilon_n)$
%and
%$\om_{m}=\frac{1}{2}(\epsilon_1+\ldots+\epsilon_{n-1}+\epsilon_n)$. But this is not really important and easy
%to change if necessary. It is important to remember that for $D_{2m}$ all representations are self dual.}

%As before, most of the time we omit $\g$ in the notation of Zhu's and $C_2$-algebras.
%The idea of the following construction is to realize {\bf a quotient of}
%$A_{[2]}(k)$ as a representation of a much
%larger group: $Spin_{4m}$, and then use restriction algorithm arguments.
%\vskip 3pt
\subsection{Even orthogonal case: the setup}
For the enumeration of fundamental weights we use the same notation as in \cite{B}.
Let $\omega_1,\dots,\omega_m$ be the set of fundamental weights of $\mso_{2m}$.
The highest root is $\theta=\omega_2$, and for
$\la=\sum_{i=1}^m a_i\omega_i$ the condition
$(\la,\theta)\le k$ can be reformulated as
\[
a_1+ \sum_{i=2}^{m-2} 2a_i + a_{m-1} + a_m\le k.
\]
We also note that for even $m$ all representations are self-dual
and for odd $m$ spin representations are dual to each other
$V(\omega_{m-1})^*\simeq V(\omega_m)$.
Thus, Theorem \ref{zhualgebra} for $\g=\mso_{2m}$ can be reformulated as
\[
A(\mso_{2m};k)\simeq
\bigoplus_{\substack{\la=\sum_{i=1}^m a_i\omega_i\\
a_1+ \sum_{i=2}^{m-2} 2a_i + a_{m-1} + a_m\le k}} V_\la\T V_\la^*,
\]
where $V(\la)^*=V(\la)$ for even $m$ and
\[
V(\sum_{i=1}^m a_i\omega_i)^*=V(\sum_{i=1}^{m-2} a_i\omega_i +
a_{m-1}\omega_m + a_m\omega_{m-1})
\]
for odd $m$.

Let $\fp_{2m}\subset \mso_{4m}$ be the maximal parabolic Lie-subalgebra associated to the fundamental
weight $\om_{2m}$. We fix a Levi decomposition $\fp_{2m}=\fl\oplus \fn$ and $\fg=\fp_{2m}\oplus \fn^-$, where $\fl\simeq\mgl_{2m}$ as Lie algebra and $\fn\simeq \Lambda^2(\bc^{2m})$ respectively
$\fn^-\simeq \Lambda^2(\bc^{2m})^*$ as $\fl=\mgl_{2m}$-module. The restriction
of the $\mgl_{2m}$-representation on $\fn$ (respectively $\fn^-$) to the subalgebra 
$\mso_{2m}\subset \mgl_{2m}$
remains irreducible, it is the adjoint representation of the orthogonal Lie algebra. Summarizing we have:
\begin{lem}\label{fnisomorphismso}
As $\mgl_{2m}$-module we have isomorphisms: $\fn\simeq  \Lambda^2(\bc^{2m})$,
$\fn^-\simeq  \Lambda^2(\bc^{2m})^*$, and as $\mso_{2m}$-module we have isomorphisms: $\fn\simeq \fn^- \simeq \mso_{2m}$.
%$\fn\simeq  \Lambda^2(\bc^{2m})\simeq \mso_{2m}$.
%As $\fl=\mgl_{2m}$ as well as $\mso_{2m}\subset \mgl_{2m}$-module we have isomorphisms:
%$\fn\simeq  \Lambda^2(\bc^{2m})\simeq \mso_{2m}$.
\end{lem}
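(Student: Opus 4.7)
The plan is to mirror the argument used for the symplectic case in Lemma~\ref{fnisomorphism}, replacing the symmetric block by a skew-symmetric one. First I would fix a realization of $\mso_{4m}$ as the stabilizer of the symmetric bilinear form on $\bc^{4m}$ given by the $4m\times 4m$ matrix with $1$'s along the anti-diagonal, and take the upper triangular matrices as Borel. In $2m\times 2m$-block form, the elements of $\mso_{4m}$ then have the shape
\[
\begin{pmatrix} A & B \\ C & -A^{nt}\end{pmatrix},\qquad B=-B^{nt},\ C=-C^{nt},
\]
with $(\cdot)^{nt}$ the anti-diagonal transpose introduced in Section~\ref{sympgrading}. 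The parabolic $\fp_{2m}$ attached to $\om_{2m}$ is the stabilizer of the maximal isotropic subspace spanned by the first $2m$ basis vectors, so it is cut out by $C=0$. The Levi decomposition is read off the block form: $\fl\simeq\mgl_{2m}$ consists of the block-diagonal matrices with $B=C=0$, while $\fn$ (respectively $\fn^-$) consists of the upper-right (respectively lower-left) anti-diagonally skew-symmetric $2m\times 2m$-blocks, both of dimension $m(2m-1)$.

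Next I would identify these as $\mgl_{2m}$-modules. Computing the adjoint action of the block-diagonal part on an upper-right block $B$ gives $\mathrm{ad}(A)\cdot B=AB+BA^{nt}$, which after rewriting via the anti-diagonal form is precisely the natural action of $\mgl_{2m}$ on skew symmetric $2$-tensors; this yields the $\mgl_{2m}$-equivariant identification $\fn\simeq\Lambda^2(\bc^{2m})$, and dually $\fn^-\simeq\Lambda^2(\bc^{2m})^*$. To get the $\mso_{2m}$-module statement, embed $\mso_{2m}\hookrightarrow\mgl_{2m}$ as the stabilizer of a non-degenerate symmetric form on $\bc^{2m}$. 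The form gives a $\mso_{2m}$-equivariant identification $\bc^{2m}\simeq(\bc^{2m})^*$, hence $\Lambda^2(\bc^{2m})\simeq\Lambda^2(\bc^{2m})^*$; composing with the form further identifies $\Lambda^2(\bc^{2m})^*$ with the space of skew-symmetric endomorphisms of $\bc^{2m}$, which is exactly $\mso_{2m}$ with its adjoint action. This produces the claimed isomorphisms $\fn\simeq\fn^-\simeq\mso_{2m}$.

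The only point that needs a word of justification is that $\Lambda^2(\bc^{2m})$ remains irreducible upon restriction from $\mgl_{2m}$ to $\mso_{2m}$, since in general restrictions can decompose further. Here this is forced by dimension: $\dim\Lambda^2(\bc^{2m})=m(2m-1)=\dim\mso_{2m}$, and the adjoint representation of the simple Lie algebra $\mso_{2m}$ is an irreducible module of the same dimension. The $\mso_{2m}$-equivariant map constructed above is nonzero, so it must be an isomorphism, which gives simultaneously irreducibility and the explicit identification with the adjoint representation. The overall argument is routine; no real obstacle is expected, the only care required is in fixing the anti-diagonal conventions so that the Levi decomposition and the identification of $\fn$ with $\Lambda^2(\bc^{2m})$ are compatible with the embedding of $\mso_{2m}$ into $\mgl_{2m}$.
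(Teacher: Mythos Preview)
Your argument is correct and follows essentially the same approach as the paper: the paper simply records, in the paragraph preceding the lemma, that the Levi is $\mgl_{2m}$, that $\fn\simeq\Lambda^2(\bc^{2m})$ and $\fn^-\simeq\Lambda^2(\bc^{2m})^*$ as $\mgl_{2m}$-modules, and that these remain irreducible upon restriction to $\mso_{2m}$ where they become the adjoint representation; you have spelled out the block-matrix computations and the dimension count that justify these assertions. The only small caveat is that your irreducibility argument invokes simplicity of $\mso_{2m}$, which tacitly assumes $m\ge 3$; this is harmless in the context of the paper.
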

In the following we always assume that for $\ell\in\bn$ the orthogonal group  
$SO_{2\ell}$ is defined to be the
group leaving invariant the symmetric form on $\bc^{2\ell}$ defined by the 
$2\ell\times 2\ell$-matrix:
$$
J=\left(\begin{array}{ccccc}0 & 0 & 0 & 0 & 1 \\0 & 0 & 0 & 1 & 0 \\0 & 0 & .\cdot\,{}^\cdot  & 0 & 0 \\0 & 1 & 0 & 0 & 0 \\ 1 & 0 & 0 & 0 & 0\end{array}\right).
$$
For a $m\times m$ matrix $A$ let $A^{nt}$ be the transpose as in section~\ref{sympgrading}.
%of a matrix with respect to the diagonal given by
%${i+j}=2m+1$, i.e. for $A=(a_{i,j})$ the matrix
%$A^{nt}=( a^{nt}_{i,j})$ is given by $a^{nt}_{i,j}=a_{2m+1-j,2m+1-i}$. 
The Lie algebra of the orthogonal group $SO_{2m}$ can then be described as the 
following set of matrices:
$$
\mso_{2m}=\left\{\left(\begin{array}{cc}
A & B  \\ C & -A^{nt} \end{array}\right)\big\vert
A,B,C\in M_m, \,B=-B^{nt},\, C=-C^{nt}
\right\}
$$
with maximal torus $\ft=\diag(t_1,\ldots,t_m,-t_m\ldots,-t_1)$ and Borel subalgebra the upper
triangular matrices of the form above.

The Lie algebra of the orthogonal group $SO_{2m}\subset GL_{2m}$ embedded in the Levi subgroup
$GL_{2m}\subset SO_{4m}$ can be seen as the set of matrices of the following form:
\begin{equation}\label{soembedd}
\mso_{2m}=\left\{\left(\begin{array}{cccc}
A & B & 0 & 0 \\ C & -A^{nt} & 0 & 0 \\ 0 & 0 & A & B \\ 0 & 0 & C & -A^{nt}
\end{array}\right)\big\vert
\begin{array}{c}
A,B,C\in M_m, \\ B=-B^{nt}\\ C=-C^{nt}
\end{array}
\right\}
\subset \mso_{4m}.
\end{equation}
There is also a maximal reductive sub-Lie-algebra of type ${\tt D}_{m}+ {\tt D}_{m}$
sitting inside $\mathfrak{so}_{4m}$ in the following way:
$$
\mso_{2m}\oplus \mso_{2m}=\left\{
\left(
\begin{array}{cccc}
K & 0 & 0 & L \\ 0 & X & Y & 0 \\ 0 & Z & -X^{nt} & 0 \\ M & 0 & 0 & -K^{nt}
\end{array}
\right)
\big\vert
\begin{array}{c}
K,L,M,X,Y,Z \in M_m \\ L=-L^{nt},M=-M^{nt} \\ Y=-Y^{nt},Z=-Z^{nt}
\end{array}
\right\}
%\subset \mso_{4m},
$$
Let $\eins_m$ denote the $m\times m$ identity matrix and let $J'$ be the $2m\times 2m$-matrix of the form
$$
J'=\left(\begin{array}{cc} 0 & \eins_m \\ \eins_m & 0 \end{array}\right).
$$
Inside the Lie subalgebra $\mathfrak{so}_{2m}\oplus \mathfrak{so}_{2m}$ we have a diagonally
embedded orthogonal Lie algebra $\Delta(\mathfrak{so}_{2m})$, where:
\begin{equation}
\label{deltasoembedd}
\Delta:\mathfrak{so}_{2m}\hookrightarrow \mathfrak{so}_{2m}\oplus \mathfrak{so}_{2m},
\quad Q\mapsto (Q,J'Q{J'}^{-1})
\end{equation}
which has the same maximal torus as the embedded orthogonal Lie subalgebra $\mathfrak{so}_{2m}$ described
in (\ref{soembedd}). As a consequence we see:
\begin{lem}\label{charactersameso}
For a $\mathfrak{so}_{4m}$-representation $\VVV(\la)$ let $\res^{\mathfrak{so}_{4m}}_{\mathfrak{so}_{2m}}\VVV(\la)$
be the $\mathfrak{so}_{2m}$-representation obtained via the embedding in (\ref{soembedd})
and denote by
$$
\res^{\mathfrak{so}_{4m}}_{\Delta(\mathfrak{so}_{2m})}\VVV(\la)
$$
the $\mathfrak{so}_{2m}$-representation obtained via the embedding in (\ref{deltasoembedd}).

Then  $\res^{\mathfrak{so}_{4m}}_{\mathfrak{so}_{2m}}\VVV(\la)\simeq
\res^{\mathfrak{so}_{4m}}_{\Delta(\mathfrak{so}_{2m})}\VVV(\la)$.
\end{lem}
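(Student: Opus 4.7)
The plan is essentially the same as for Lemma \ref{charactersame} in the symplectic case: reduce the isomorphism of $\mathfrak{so}_{2m}$-modules to the equality of formal characters on a common maximal torus. The statement immediately preceding the lemma asserts that the diagonally embedded copy $\Delta(\mathfrak{so}_{2m})$ inside $\mathfrak{so}_{2m}\oplus\mathfrak{so}_{2m}\subset\mathfrak{so}_{4m}$ has the same maximal torus as the straight embedding (\ref{soembedd}); this is the only algebraic content that needs to be verified, and it is a direct matrix computation.

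First, I would fix the diagonal Cartan subalgebra $\mathfrak{t}=\diag(t_1,\ldots,t_m,-t_m,\ldots,-t_1)$ of $\mathfrak{so}_{2m}$ and compute both images in $\mathfrak{so}_{4m}$. Under (\ref{soembedd}) the image is the $4m\times 4m$ diagonal matrix with entries $(t_1,\ldots,t_m,-t_m,\ldots,-t_1,t_1,\ldots,t_m,-t_m,\ldots,-t_1)$. For the diagonal embedding (\ref{deltasoembedd}), an element $Q\in\mathfrak{t}$ maps to $(Q,J'QJ'^{-1})$; since $(J')^2=\eins_{2m}$, a short block calculation with $Q=\begin{pmatrix}A&0\\0&-A^{nt}\end{pmatrix}$ shows that $J'QJ'^{-1}=\begin{pmatrix}-A^{nt}&0\\0&A\end{pmatrix}$. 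Inserting this pair into the block-matrix description of $\mathfrak{so}_{2m}\oplus\mathfrak{so}_{2m}$ (with $K=A$, $X=-A^{nt}$) yields the same $4m\times 4m$ diagonal matrix as the straight embedding. Hence the two embeddings of $\mathfrak{t}$ into the Cartan of $\mathfrak{so}_{4m}$ coincide as subspaces, pointwise.

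Once the tori agree, the rest is formal. The $\mathfrak{so}_{4m}$-module $\VVV(\lambda)$ is finite dimensional and semisimple, hence so is its restriction under either embedding, and such an $\mathfrak{so}_{2m}$-module is determined up to isomorphism by its formal character on $\mathfrak{t}$. But the weight-space decomposition with respect to the common image of $\mathfrak{t}$ in $\mathfrak{so}_{4m}$ is intrinsic to $\VVV(\lambda)$ and does not depend on which of the two embeddings we declare to be "the" $\mathfrak{so}_{2m}$-action. Therefore
\[
\chara\bigl(\res^{\mathfrak{so}_{4m}}_{\mathfrak{so}_{2m}}\VVV(\lambda)\bigr)
= \chara\bigl(\res^{\mathfrak{so}_{4m}}_{\Delta(\mathfrak{so}_{2m})}\VVV(\lambda)\bigr),
\]
and the isomorphism of $\mathfrak{so}_{2m}$-modules follows.

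The main (and only) obstacle is bookkeeping: one has to be careful with the anti-transpose convention and the ordering of torus coordinates to confirm that the two diagonal images really do coincide rather than merely being conjugate inside $\mathfrak{so}_{4m}$. Once this is checked, the argument is identical in spirit to the proof of Lemma \ref{charactersame}, and in fact no appeal to any deeper representation theory beyond semisimplicity and the character criterion is needed.
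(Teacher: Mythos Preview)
Your proposal is correct and follows exactly the paper's reasoning: the paper asserts just before the lemma that the two embeddings share the same maximal torus and then states the lemma with the phrase ``As a consequence we see,'' leaving the character argument implicit. Your matrix check that both embeddings send $\diag(t_1,\ldots,t_m,-t_m,\ldots,-t_1)$ to the same diagonal in $\mathfrak{so}_{4m}$, together with the standard fact that finite-dimensional semisimple modules are determined by their characters, is precisely the justification the paper omits.
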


\subsection{The Lie algebra $\mso_{4m}$ and a quotient of $A_{[2]}(\mso_{2m};k)$.}
We fix the standard maximal torus (diagonal matrices) and Borel subalgebra (upper triangular matrices)
for $\mgl_{2m}$, then the maximal torus and the Borel subalgebra of
$\mso_{2m}$ and $\mgl_{2m}$ are contained in each other. Let $\theta$ be the maximal
root in the root system of $\mso_{2m}$, we identify the module $\mso_{2m}$ again with $\fn^-$ (Lemma~\ref{fnisomorphismso}).
Fix a highest root vector $x_\ta$, then
$x_\ta$ is a highest weight vector for the adjoint action of $\mso_{2m}$ as well as
for the irreducible action of $\mgl_{2m}$ on the same representation space.
It follows that
\begin{equation}
\label{rootsandweightsso}
x_\ta^{k+1}\in S^\bullet(\mso_{2m})=S^\bullet(\Lambda^2(\bc^{2m}))^*=S^\bullet(\fn)
\end{equation}
is a highest weight vector of weight $(k+1)\omega_2$ for the action of the orthogonal group
$SO_{2m}$ and of weight $-(k+1)(\epsilon_{2m-1}+\epsilon_{2m})$
for the general linear group $GL_{2m}$. One checks easily
the following connection between $x_\ta$ and the root vectors for the Lie algebra $\mso_{4m}$:

\begin{lem}\label{rootequalityso}
Let $X_{-\alpha_{2m}}\in\fn\subset\mso_{4m}$ be a root vector for the negative of 
the simple root $\alpha_{2m}$ of the root system
of $\mso_{4m}$. With respect to the embedding in (\ref{soembedd}), $X_{-\alpha_{2m}}$
is a weight vector for the Lie algebra $\mso_{2m}$ of weight $\theta$. 
%In particular, $X_{\alpha_{2m}}$ and $x_\ta$ (see 
%(\ref{rootsandweightsso})) are conjugate (up to some non-zero scalar multiple)
%with respect to action of the Weyl group of $SO_{2m}$ on $\fn$.
\end{lem}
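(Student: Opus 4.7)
The plan is to argue in direct analogy with Lemma~\ref{rootequality} in the symplectic case: the statement is ultimately a matching of weights, so the proof reduces to writing down the embedding of the Cartan subalgebra of $\mso_{2m}$ into that of $\mso_{4m}$ given by \eqref{soembedd} and restricting $-\alpha_{2m}$ to it. Since $X_{-\alpha_{2m}}$ is a weight vector for the Cartan of $\mso_{4m}$ and the Cartan of $\mso_{2m}$ sits inside it, $X_{-\alpha_{2m}}$ is automatically a weight vector for $\mso_{2m}$ as well; the only content of the lemma is to identify the restricted weight with $\theta=\omega_2=\epsilon_1+\epsilon_2$.

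First I would set $A=\diag(t_1,\ldots,t_m)$ and $B=C=0$ in \eqref{soembedd} to see how the maximal torus $\ft=\diag(t_1,\ldots,t_m,-t_m,\ldots,-t_1)$ of $\mso_{2m}$ sits inside the maximal torus $\diag(s_1,\ldots,s_{2m},-s_{2m},\ldots,-s_1)$ of $\mso_{4m}$. A direct comparison of the four diagonal blocks yields
\[
s_i=t_i\quad(i=1,\ldots,m),\qquad s_{m+j}=-t_{m+1-j}\quad(j=1,\ldots,m),
\]
the two halves being automatically consistent because of the $-A^{nt}$ terms.

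Next I would plug this into $\alpha_{2m}=\epsilon_{2m-1}+\epsilon_{2m}$, the spin-fork simple root of type ${\tt D}_{2m}$. The formulas above give $s_{2m-1}=-t_2$ and $s_{2m}=-t_1$, hence $\alpha_{2m}$ restricts to $-t_1-t_2$ and $-\alpha_{2m}$ restricts to $t_1+t_2$. The highest root of $\mso_{2m}$ is $\theta=\omega_2=\epsilon_1+\epsilon_2$, which in the $t$-coordinates is exactly $t_1+t_2$. This matches, so $X_{-\alpha_{2m}}$ has weight $\theta$ for the embedded $\mso_{2m}$.

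There is no real obstacle here: the proof is a conventions-and-bookkeeping calculation, and the main care is in aligning the antidiagonal form $J$ and the transpose $A^{nt}$ in \eqref{soembedd} with the Bourbaki numbering of simple roots. The argument is, as one expects, the orthogonal analogue of the symplectic computation, where $\theta=2\omega_1$ was matched with the restriction of $-\alpha_{2m}$ in type ${\tt C}$; here $\theta=\omega_2$ is matched with the restriction of $-\alpha_{2m}$ in type ${\tt D}$, reflecting the fact that the adjoint representation of $\mso_{2m}$ is $\Lambda^2(\bc^{2m})$ (with highest weight $\omega_2$) rather than $S^2(\bc^{2m})$.
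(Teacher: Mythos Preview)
Your proposal is correct and is precisely the direct verification the paper has in mind: the paper does not give a proof of this lemma at all, merely prefacing it with ``one checks easily the following connection,'' so your explicit Cartan computation via \eqref{soembedd} fills in exactly what is left to the reader. The bookkeeping (reading off $s_{m+j}=-t_{m+1-j}$ from the block form and restricting $\alpha_{2m}=\epsilon_{2m-1}+\epsilon_{2m}$) is carried out correctly.
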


To distinguish between the highest weight representations of the different groups, we
write $V(\la)$ for the $\mso_{2m}$-representations, $\VV(\la)$ for the $\mgl_{2m}$-representations and
$\VVV(\la)$ for the $\mso_{4m}$-representations of highest weight $\la$ (whenever this makes sense).

The irreducible $\mgl_{2m}$-module $U(\mgl_{2m})\circ x_\ta^{k+1}$
generated by $x_\ta^{k+1}$ is of weight %$-2(k+1)\epsilon_{2m}$
$-(k+1)(\epsilon_{2m-1}+\epsilon_{2m})$, and hence {does not}
remain irreducible when restricted to $\mso_{2m}$, i.e. we have the following
sequence of inclusions of vector spaces:
\begin{equation}
\label{sosamespace}
U(\mgl_{2m})\hskip -2pt\circ \hskip -2pt X_{-\alpha_{2m}}^{k+1}
\hskip -2pt=\hskip -2pt U(\mgl_{2m})\circ x_\ta^{k+1}\hskip -2pt \supset
\hskip -2pt U(\mso_{2m})\hskip -2pt \circ\hskip -2pt x_\ta^{k+1}
\hskip -2pt =\hskip -2pt U(\mso_{2m})\hskip -2pt\circ\hskip -2pt X_{-\alpha_{2m}}^{k+1}.
\end{equation}
Let $\VVV(k\omega_{2m})$ be the irreducible $Spin_{4m}$-module of highest weight $k\omega_{2m}$.
The nilpotent radical $\fn$ of $\fp_{2m}$ is abelian (since $\om_{2m}$ is a cominuscule weight,
see for example \cite{FFL}). Recall the following isomorphism of $\fl$-modules
(see \cite{FFL}, Lemma 3.1):
$$
\VVV(k\omega_{2m})\otimes\bc_{-k\om_{2m}}\simeq
S^\bullet(\fn^-)/\langle U(\fl)\circ x_\ta^{k+1}\rangle,
$$
where  $\langle \cdots \rangle$ denotes the ideal
generated by the corresponding subspace.
Combining this isomorphism with (\ref{sosamespace}), we get
as a consequence the following morphisms of $\mgl_{2m}$-modules
respectively $\mso_{2m}$-modules. In particular, we obtain a quotient of the
$C_2$-algebra $A_{[2]}(\mso_{2m};k)$
as a $\mso_{4m}$-representation:
\begin{lem}\label{representglso}
\begin{eqnarray*}
 A_{[2]}(\mso_{2m};k)&=&S^\bullet(\mso_{2m})/\langle U(\mso_{2m})\circ x_\ta^{k+1}\rangle\\
 &\rightarrow & S^\bullet(\Lambda^2\bc^{2m})^*/\langle U(\mgl_{2m})\circ x_\ta^{k+1}\rangle\\
 &=&S^\bullet(\fn^-)/\langle U(\mgl_{2m})\circ X_{\alpha_{2m}}^{k+1}\rangle\\
&\simeq&\VVV(k\omega_{2m})\otimes\bc_{-k\om_{2m}}.
\end{eqnarray*}
\end{lem}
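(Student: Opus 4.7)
The chain has four terms joined by $=, \to, =, \simeq$, and each link is either a definition or an immediate consequence of a lemma already established in this section. My plan is therefore simply to assemble them, paying attention to the one link that is merely a surjection rather than an equality.

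The opening equality is the definition of the $C_2$-algebra recalled in Section~\ref{setup}. For the middle arrow, I would use Lemma~\ref{fnisomorphismso} to identify $\mso_{2m}\simeq\fn^-=\Lambda^2(\bc^{2m})^*$ as $\mso_{2m}$-modules; this lifts to an isomorphism of symmetric algebras under which $x_\ta^{k+1}$ corresponds to itself. By the inclusion recorded in (\ref{sosamespace}), the two-sided ideal in $S^\bullet(\fn^-)$ generated by $U(\mgl_{2m})\circ x_\ta^{k+1}$ contains the one generated by $U(\mso_{2m})\circ x_\ta^{k+1}$; hence the quotient map $S^\bullet(\fn^-)\to S^\bullet(\fn^-)/\langle U(\mgl_{2m})\circ x_\ta^{k+1}\rangle$ factors through the source and produces a surjective $\mso_{2m}$-equivariant homomorphism. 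The second equality is then just a relabelling: by Lemma~\ref{rootequalityso}, $x_\ta$ is identified with the root vector $X_{-\alpha_{2m}}$ sitting in $\fn^-\subset\mso_{4m}$ via the embedding (\ref{soembedd}), so the two cyclic $\mgl_{2m}$-modules, and hence the two ideals, coincide.

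For the final isomorphism I would invoke Lemma~3.1 of \cite{FFL}. Its hypothesis that the nilpotent radical $\fn$ of $\fp_{2m}$ be abelian holds because $\om_{2m}$ is a cominuscule weight for $\mso_{4m}$; specialising to $\fl=\mgl_{2m}$ then gives $\VVV(k\om_{2m})\otimes\bc_{-k\om_{2m}}\simeq S^\bullet(\fn^-)/\langle U(\mgl_{2m})\circ x_\ta^{k+1}\rangle$, which matches the third term of the chain.

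There is no genuine obstacle here, since the lemma is essentially an assembly statement. The conceptually important point --- rather than a technical one --- is that the middle arrow is \emph{not} an isomorphism, in contrast to the symplectic situation of Lemma~\ref{representglsp}: the inclusion in (\ref{sosamespace}) is strict because the $\mgl_{2m}$-module cyclically generated by $x_\ta^{k+1}$ decomposes non-trivially on restriction to $\mso_{2m}$. This is precisely why only a quotient of the $C_2$-algebra $A_{[2]}(\mso_{2m};k)$ can be accessed by this construction, and it is what motivates restricting attention in the remainder of Section~5 to this quotient.
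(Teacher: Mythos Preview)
Your proposal is correct and follows exactly the approach of the paper: the lemma is stated there with the preceding paragraph serving as proof, and that paragraph says precisely ``combining this isomorphism [from \cite{FFL}, Lemma~3.1] with (\ref{sosamespace})'', which is what you have spelled out link by link. Your remark that the middle arrow is only a surjection because the inclusion in (\ref{sosamespace}) is strict is also the point the paper emphasizes.
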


\begin{rem}\label{dualrem}
Of course, one can use the representation $\VVV(k\omega_{2m-1})$ instead
of $\VVV(k\omega_{2m})$. It turns out that
in order to compare Zhu's algebra and the $C_2$-algebra of type $D_m$ one has to use
the first representation for odd $m$ and the second one for even $m$.
We work out in details the even $m$ case. The odd case is considered in
subsection \ref{dual}.
\end{rem}

\subsection{The $\mso_{2m}\times \mso_{2m}$-decomposition of $\VVV(k\omega_{2m})$}
%\subsubsection{The $GL_{2m}$-decomposition.}
%The action of the Levi subgroup $GL_{2m}\subset P_{2m}\subset Spin_{4m}$
%on $Spin_{2m}/P_{2m}$ is spherical \cite{L1}. It follows from the results in
%\cite{L1} that
%\begin{prop}
%The $\mso_{4m}$-module $\VVV(k\omega_{2m})$ decomposes with respect to Levi Lie
%subalgebra $\mgl_{2m}$ (up to a twist by a $GL$-character) into the direct sum:
%$$
%\res^{Spin_{4m}}_{GL_{2m}} \VVV(k\omega_{2m})=\bigoplus_{\substack{i=0 \\ \sum a_i=k}}^m \VV(a_i\om_{2i})
%$$
%end{prop}
%%%%%%%%%%%%%%%%%%%%%%%%%%%%%%%%%%%%%%%%%%%%%%%%%%%%%%%%%%%%
%%%%%%%%%%%%%%%%%%%%%%%%%%%%%%%%%%%%%%%%%%%%%%%%%%%%%%%%%%%%
In the following we investigate the decomposition of $\VVV(k\omega_{2m})$ as
$\mso_{2m}$-module. By Lemma~\ref{charactersameso} it suffices to describe
the $\mso_{2m}\times \mso_{2m}$-module structure.

We proceed as in the symplectic case and use geometric methods:
let $P_{2m}\subset Spin_{4m}$ be the maximal parabolic subgroup
associated to the weight $\omega_{2m}$ and set $Y=Spin_{4m}/P_{2m}\subset{\mathbb{P}(\VVV(k\omega_{2m}))}$.
We denote by $\Y\subset \VVV(k\omega_{2m})$ the affine cone over the projective variety.

The group $Spin_{4m}$ acts on the affine variety $\Y$ and hence on its coordinate ring $\bc[\Y]$. As
$Spin_{4m}$-module, this ring is the direct sum:
$$
\bc[\Y]=\bigoplus_{\ell\ge 0} \VVV(\ell\omega_{2m}).
$$
The ring $\bc[\Y]$ is naturally graded with $\VVV(\ell\omega_{2m})$ as
$\ell$-th graded component.
Let $U\subset G=Spin_{2m}\times Spin_{2m}$ be the unipotent radical
of a Borel subgroup. 
%In a representation of $G$ the $U$-fixed vectors are sums
%of highest weight vectors. 
%The ring $\bc[\Y]^U$ of $U$-invariant vectors
%completely determines the structure of $\bc[\Y]$ as $G$-representation.

\begin{prop}\label{uinvariantso}
The ring of $U$-invariant functions
$$
\bc[\Y]^U=\bigoplus_{k\ge 0} \VVV(k\omega_{2m})^U
$$
is a polynomial ring generated by its degree 1 elements of weight
$\omega_m\otimes\omega_m$ and $\omega_{m-1}\otimes\omega_{m-1}$
and the degree 2 elements $\om_i\otimes \om_i$ for $i=0,\ldots,m-2$,
where $\om_0$ denotes the trivial weight.
\end{prop}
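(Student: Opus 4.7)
The plan is to adapt the strategy from the symplectic case (Proposition~\ref{uinvariant}) in three stages: sphericity of the $G = Spin_{2m} \times Spin_{2m}$ action on $Y$, which together with the UFD property of $\bc[\Y]$ forces $\bc[\Y]^U$ to be a polynomial ring; exhibition of the claimed highest weight vectors in degrees $1$ and $2$; and a Krull dimension count showing that these exhaust a generating set.

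For sphericity I would apply the local structure theorem \cite{BLV} exactly as in Step 4 of the proof of Theorem~\ref{dimensioncompare}. The stabilizer $R$ of $\overline{1} \in Y$ in $G$ is a parabolic subgroup with Levi $L$, and Luna's slice theorem reduces sphericity of $G$ on $Y$ to sphericity of $L$ on the normal slice $N$ at $\overline{1}$. In the orthogonal setting $R$ is the parabolic of $Spin_{2m}\times Spin_{2m}$ associated to the spinor fundamental weights, so that $L$ is essentially $GL_m \times GL_m$ and $N$ is a known spherical $L$-module. As in the symplectic case, sphericity combined with the UFD property of $\bc[\Y]$ and the absence of nontrivial characters on $U$ yields that $\bc[\Y]^U = \bigoplus_{k \ge 0} \VVV(k\om_{2m})^U$ is a polynomial ring.

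Next I would identify the generators. The degree $1$ piece is read off from the branching of the half-spin representation of $Spin_{4m}$ to $Spin_{2m} \times Spin_{2m}$: as $G$-modules,
\[
\VVV(\om_{2m}) \simeq V(\om_m)\otimes V(\om_m) \;\oplus\; V(\om_{m-1})\otimes V(\om_{m-1}),
\]
giving the two claimed degree $1$ generators of weights $\om_m\otimes\om_m$ and $\om_{m-1}\otimes\om_{m-1}$. For the degree $2$ generators I would decompose $\VVV(2\om_{2m})$ under $G$, using either a character identity analogous to (\ref{2m})--(\ref{p}) or the multiplicity-freeness coming from sphericity combined with a dimension count, and extract highest weight vectors of the remaining desired weights $\om_i \otimes \om_i$ for $i = 0, \ldots, m-2$. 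These cannot be obtained from products of the two degree $1$ generators, since all such products have weights supported only on $\om_{m-1}$ and $\om_m$, whereas the new vectors have weights supported on $\om_0,\ldots,\om_{m-2}$.

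Finally I would close via a Krull dimension count. Rosenlicht's theorem, applied as in Lemma~\ref{dimensiospuinvariant}, yields $\trdg \bc[\Y]^U = \dim \Y - \dim(\text{generic } U\text{-orbit})$, and the local-structure analysis of the first step should give $m+1$ for this difference. Since we have exhibited $2 + (m-1) = m+1$ homogeneous generators, algebraically independent by their distinct weights inside a polynomial ring, they must generate $\bc[\Y]^U$. The main obstacle is the local-structure analysis itself: correctly identifying the parabolic $R$, the Levi $L$ and the normal slice $N$, and verifying that a generic $U$-orbit in $\Y$ has codimension exactly $m+1$; the branching computations for $\VVV(\om_{2m})$ and $\VVV(2\om_{2m})$ are standard but need to be made explicit to confirm that every claimed highest weight vector actually appears.
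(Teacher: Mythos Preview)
Your proposal is correct and follows essentially the same route as the paper: sphericity via the local structure theorem (with $R=P_m\times P_m$, Levi $L=GL_m\times GL_m$, and normal slice $N=\bc^m\otimes\bc^m$, exactly as in the symplectic case), UFD $\Rightarrow$ polynomial ring, explicit branching of $\VVV(\om_{2m})$ and $\VVV(2\om_{2m})$ to $\mso_{2m}\oplus\mso_{2m}$ to locate the $m+1$ generators, and the Krull dimension $m+1$ via Rosenlicht. The only place where the paper is more explicit than your sketch is the identification of $N$ and the degree $2$ branching (obtained from $\Lambda^{2m}\bc^{4m}=\VVV(2\om_{2m})\oplus\VVV(2\om_{2m-1})$), both of which you correctly flag as the points needing verification.
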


As an immediate consequence we get:
\begin{cor}\label{zerlegspso}
$$
\res_{Spin_{2m}\times Spin_{2m}}^{Spin_{4m}} \VVV(k\omega_{2m})=
\bigoplus_{\substack{\la=\sum_{i=1}^m a_i\om_i\\ (\sum_{i=1}^{m-2} 2a_i)+a_{m-1}+a_m\le k \\
k-a_{m-1}-a_m\equiv 0\pmod 2}} V(\la)\otimes V(\la).
$$
\end{cor}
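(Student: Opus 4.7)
The plan is to derive the corollary directly from Proposition~\ref{uinvariantso} by counting highest weight vectors in each graded piece. Since $\VVV(k\om_{2m})$ is the degree $k$ graded component of $\bc[\Y]$, and the $G=Spin_{2m}\times Spin_{2m}$-module structure of any finite-dimensional rational representation is completely determined by the multi-graded dimensions of the spaces of $U$-invariants (one highest weight vector per irreducible summand), it suffices to enumerate monomials in the polynomial generators of $\bc[\Y]^U$ that land in bidegree $(k,\la\otimes\la)$.

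First I would fix notation for the generators provided by Proposition~\ref{uinvariantso}: call them $f_0,f_1,\ldots,f_{m-2}$ (each of total degree $2$, with $f_i$ of weight $\om_i\otimes\om_i$ and $\om_0$ the trivial weight), together with $f_{m-1},f_m$ of degree $1$ and weights $\om_{m-1}\otimes\om_{m-1}$ and $\om_m\otimes\om_m$ respectively. Given $\la=\sum_{i=1}^m a_i\om_i$, any $U$-invariant of weight $\la\otimes\la$ is a scalar multiple of a monomial $\prod_{i=0}^m f_i^{b_i}$, and comparing weights on the two factors of $G$ forces $b_i=a_i$ for $1\le i\le m$, while $b_0\ge 0$ remains a free parameter.

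Next I would impose the degree constraint: the total degree of $\prod_i f_i^{b_i}$ equals $2b_0+\sum_{i=1}^{m-2}2a_i+a_{m-1}+a_m$, which must equal $k$. This pins down $2b_0=k-\sum_{i=1}^{m-2}2a_i-a_{m-1}-a_m$, so a (unique) non-negative integer $b_0$ exists precisely when the right-hand side is non-negative and even. Non-negativity rewrites as the inequality $\sum_{i=1}^{m-2}2a_i+a_{m-1}+a_m\le k$ in the corollary, and since $\sum 2a_i$ is automatically even, the parity condition reduces to $k-a_{m-1}-a_m\equiv 0\pmod 2$. Each admissible $\la$ therefore contributes exactly one copy of $V(\la)\otimes V(\la)$ to the restriction, giving the claimed decomposition. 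There is no real obstacle here: all the substantive work lies in Proposition~\ref{uinvariantso}, while this corollary is a bookkeeping exercise matching monomials in a polynomial ring to highest weight vectors of prescribed bi-weight and degree.
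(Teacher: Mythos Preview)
Your argument is correct and is precisely the intended one: the paper states the corollary as an ``immediate consequence'' of Proposition~\ref{uinvariantso}, and what you have written is exactly the monomial-counting in the polynomial ring $\bc[\Y]^U$ that makes this immediate consequence explicit. The only point one might add for completeness is that since every generator has weight of the form $\om_i\otimes\om_i$, every monomial has symmetric weight $(\mu,\mu)$, so no summands $V(\mu)\otimes V(\nu)$ with $\mu\neq\nu$ can occur; but this is implicit in your computation.
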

%%%%%%%%%%%%%%%%%%%%%%%%%%%%%%%%%%%%%%%%
%%%%%%%%%%%%%%%%%%%%%%%%%%%%%%%%%%%%%%%%
It remains to prove Proposition~\ref{uinvariantso}. A first step in this
direction is the following:
\begin{prop}
The action of $G=Spin_{2m}\times Spin_{2m}$ on $Y=Spin_{4m}/P_{2m}$ is spherical, i.e.
a Borel subgroup of $G$ has a dense orbit in $Y$.
\end{prop}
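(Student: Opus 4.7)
The proof plan mirrors Step 4 of the proof of Theorem \ref{dimensioncompare}. I would apply the Brion--Luna--Vust local structure theorem to the $G = Spin_{2m} \times Spin_{2m}$-action on $Y = Spin_{4m}/P_{2m}$ at the base point $y = \overline{1}$, and then invoke Luna's slice theorem to reduce sphericality of the $G$-action on $Y$ to sphericality of a slice representation of the Levi of the stabilizer $G_y$.

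Since $\om_{2m}$ is a cominuscule weight in type $D_{2m}$, the variety $Y$ is (one connected component of) the Grassmannian $OG(2m,4m)$ of maximal isotropic subspaces in $\bc^{4m}$. Under the block embedding (\ref{soembedd}), and its conjugate given in (\ref{deltasoembedd}), the subgroup $G$ preserves a splitting $\bc^{4m} = V_1 \oplus V_2$ into two non-degenerate orthogonal $2m$-dimensional subspaces, each carrying the defining representation of one factor of $Spin_{2m}$. The unique closed $G$-orbit in $Y$ consists of those maximal isotropic subspaces that split compatibly with this decomposition, so the stabilizer is $R = G_y = P \times P$, with $P \subset Spin_{2m}$ the maximal parabolic associated to one of the two spin weights ($\om_{m-1}$ or $\om_m$, depending on the choice of base point). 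In either case the Levi of $R$ is $L = GL_m \times GL_m$.

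Let $Q$ be the parabolic in $G$ opposite to $R$ with $L = R \cap Q$. The local structure theorem produces a locally closed $L$-stable affine subvariety $Z \subset Y$ such that $Q^u \times Z \to Q^u \cdot Z$ is an isomorphism onto an open subset of $Y$. Smoothness of $Y$ transfers to $Z$, and Luna's slice theorem then reduces sphericality of $L$ on $Z$ to sphericality of the slice representation $N := T_y Z$, which is identified with the fibre at $y$ of the normal bundle of the closed orbit $G/R \hookrightarrow Y$. A dimension count yields
\[
\dim N \;=\; \binom{2m}{2} - 2\binom{m}{2} \;=\; m^2.
\]
As an $L$-module, the restriction of $T_y Y \cong \Lambda^2(\bc^{2m})^*$ contains the tangent space $T_y(G/R)$ of the two spin Grassmannian factors, each contributing a copy of $\Lambda^2(\bc^m)^*$; what remains, and what we identify with $N$, is the mixed piece $(\bc^m)^* \otimes (\bc^m)^*$, that is, the exterior tensor product of the two standard representations of the two $GL_m$ factors. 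The outer $GL_m \times GL_m$-action on $\bc^m \otimes \bc^m$ is multiplicity free (a classical instance of Cauchy's identity, equivalently $GL_m$-$GL_m$ Howe duality), hence spherical. This gives sphericality of $L$ on $N$, of $L$ on $Z$, and thus of $G$ on $Y$.

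The main technical work is in the identification of the stabilizer $R$ and the $L$-module structure of the normal fibre $N$: both require unwinding the matrix embeddings (\ref{soembedd}) and (\ref{deltasoembedd}) and tracking carefully which of the two spin parabolics of $Spin_{2m}$ appears. Once this is done, the conclusion reduces to the well-known sphericality of $GL_m \times GL_m$ on $\bc^m \otimes \bc^m$.
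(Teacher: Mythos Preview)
Your proposal is correct and follows essentially the same approach as the paper: apply the Brion--Luna--Vust local structure theorem at $y=\overline{1}$, identify the stabilizer as a product of two maximal spin parabolics with Levi $L=GL_m\times GL_m$, and reduce via Luna's slice theorem to the sphericality of the $L$-action on the normal fibre $N\simeq\bc^m\otimes\bc^m$. You supply more detail than the paper (the dimension count, the identification of $N$ inside $\Lambda^2(\bc^{2m})^*$, and the reason $\bc^m\otimes\bc^m$ is spherical), but the argument is the same.
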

\begin{proof}
As before we use the local structure theorem \cite{BLV}:
In our situation we have $Y=Spin_{4m}/P_{m}$, $y=\overline{1}$ is the class of the identity,
$G_y$ is $R=P_m\times P_m$, where $P_m\subset Spin_{2m}$ is the maximal parabolic
associated to the fundamental weight $\omega_m$.
Let $Q$ be the opposite parabolic subgroup, then $\fl=\mgl_m\times \mgl_m$.

Denote by $Q^u$ the unipotent radical of $Q$ and let
$$
O=G.\overline{1} \simeq (Spin_{2m}\times Spin_{2m})/(P_m\times P_m)=G/R
$$
be the closed orbit in $Y$. If we apply the local structure theorem to this situation,
then we may assume that $Z$ is smooth since $Y$ is smooth. Moreover, the action of
$G$ on $Y$ is spherical if the action of $L$ on $Z$ is spherical. Consider
the normal bundle ${\mathcal N}$ of $O$ in $X$ with fibre
$N$ at the coset $\overline{1}$ of the identity in $G/R$.
Then $N$ is isomorphic (as $L$-module) to the tangent space $T_y Z$ of
the $L$-fixed point $y$. It follows now by Luna's slice theorem that the action
of $L$ on $Z$ is spherical if and only if the action on $N$ is spherical. Now
as representation for the Lie algebra $\fl=\mgl_m\oplus \mgl_m$ we get the action
on $N=\bc^m\otimes \bc^m$, which is a spherical action.
\end{proof}

Let $\Y$ be the affine cone over $Y$. Since the coordinate ring $\bc[\Y]$ of
the affine cone over $Y=Spin_{4m}/P_{2m}$ is a unique factorization
ring, it follows:
\begin{cor}
The ring of $U$-invariant functions:
$$\bc[\Y]^U=\bigoplus_{\ell\ge 0} \VVV(\ell\omega_{2m})^U$$
is a polynomial ring.
\end{cor}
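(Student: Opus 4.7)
The plan is to deduce the Corollary from the general principle that was already invoked in the symplectic setting (\cite{L1}, Lemma 1): if $\bc[\Y]$ is a UFD and the action of a connected reductive group $G$ on $\Y$ is spherical, then the ring of $U$-invariants, for $U$ the unipotent radical of a Borel subgroup of $G$, is a polynomial ring. All the ingredients of this principle are in place after the two preceding propositions, so the task is essentially to check that each hypothesis applies and to spell out how they combine.

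First I would verify that $\bc[\Y]$ is a UFD. The variety $Y=Spin_{4m}/P_{2m}$ is a projective homogeneous space for the simply connected group $Spin_{4m}$, so $\mathrm{Pic}(Y)\cong\Z$ is cyclic and generated by the very ample line bundle inducing the embedding $Y\hookrightarrow\mathbb{P}(\VVV(\om_{2m}))$. Together with projective normality of $Y$, this forces the homogeneous coordinate ring $\bc[\Y]=\bigoplus_{\ell\ge 0}\VVV(\ell\om_{2m})$ to be factorial.

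Second, since $U$ is connected unipotent it has no non-trivial characters, and so by the cited theorem the ring $\bc[\Y]^U$ is again a UFD. Combining this with the sphericity of the $Spin_{2m}\times Spin_{2m}$-action on $Y$, just established via the local structure theorem and Luna's slice theorem, we get that each $T$-weight space of $\bc[\Y]^U$ (with $T\subset G$ a maximal torus) is at most one-dimensional, because sphericity implies that every $\VVV(\ell\om_{2m})$ decomposes without multiplicities as a $G$-module.

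The final step is the general observation that a $\Z_{\ge 0}$-graded UFD whose $T$-weight spaces are each at most one-dimensional is automatically a polynomial ring in highest-weight vectors: any homogeneous prime element must be a $T$-eigenvector (its $T$-orbit spans the one-dimensional weight space containing it), so unique prime factorization respects the weight grading and forces the monoid of $T$-weights occurring in $\bc[\Y]^U$ to be free. Choosing one nonzero vector in each weight space corresponding to a generator of this free monoid yields a system of algebraically independent polynomial generators. The only non-routine point is precisely this passage from UFD plus multiplicity-freeness to polynomial ring, since the argument that primes are weight vectors and that the weight semigroup is free is the crucial abstract input; everything else is bookkeeping of the preceding propositions.
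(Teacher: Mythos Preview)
Your proposal is correct and follows essentially the same approach as the paper: the paper simply notes that $\bc[\Y]$ is a UFD and invokes \cite{L1}, Lemma~1 (exactly as in the symplectic case), whereas you supply additional detail by justifying the UFD property via $\mathrm{Pic}(Y)\cong\Z$ and by sketching the content of that lemma (passage from UFD plus multiplicity-freeness to polynomiality). No substantive difference in strategy.
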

\noindent
{\it Proof of Proposition~\ref{uinvariantso}.\/}
To finish the proof of Proposition~\ref{uinvariantso} we have to show that the
generators of $\bc[\Y]^U$ are of the desired degrees and weights.

To calculate the $\mso_{2m}\oplus \mso_{2m}$-character, recall that
(see for example \cite{FH}, Theorem 19.2) for the $Spin_{4m}$-modules we have:
$$
\Lambda^{2m}\bc^{4m}=\Lambda^{2m}(\bc^{2m}\oplus \bc^{2m})= \VVV(2\om_{2m})\oplus \VVV(2\om_{2m-1}).
$$
Using the decomposition $\bc^{4m}=(\bc^{2m}\oplus \bc^{2m})$,
with this description one verifies easily that, as $\mso_{2m}\oplus \mso_{2m}$-module, we have:
$$
\begin{array}{rcl}
\res_{\mso_{2m}\oplus \mso_{2m}}^{\mso_{4m}}
\VVV(2\omega_{2m})&=& \bc\oplus V(\omega_1)\otimes V(\omega_1)\oplus\ldots \oplus
V(\omega_{m-2})\otimes V(\omega_{m-2})\\
& &  \oplus  V(\omega_{m-1}+\omega_{m})\otimes V(\omega_{m-1}+\omega_{m})\\
& &\oplus V(2\omega_{m-1})\otimes V(2\omega_{m-1})\oplus V(2\omega_m)\otimes V(2\omega_m).
\end{array}
$$
For the fundamental representation one computes:
$$
\res_{\mso_{2m}\oplus \mso_{2m}}^{\mso_{4m}}\VVV(\omega_{2m})=
V(\omega_{m-1})\otimes V(\omega_{m-1})\oplus V(\omega_{m})\otimes V(\omega_{m})
$$

Let $f_0,\ldots,f_m\in \VVV(\omega_{2m})\oplus \VVV(2\omega_{2m})$ be
highest weight vectors for the $\mso_{2m}\oplus \mso_{2m}$-action, where $f_0$ is a
$\mso_{2m}\oplus \mso_{2m}$-invariant function of degree 2, $f_1,\ldots,f_{m-2}$ are of
weight $\omega_i\otimes\omega_i$ for $i=1,\ldots, m-2$ and of degree 2, and $f_{m-1},f_m$
are of degree 1 and of weight $\omega_{m-1}\otimes \omega_{m-1}$ respectively $\omega_{m}\otimes \omega_{m}$.

The collection of functions $f_0,\ldots,f_m,f_{m-1}^2, f_{m-1}f_m,f_m^2$
is a basis for the subspace of highest weight vectors in $\VVV(\omega_{2m})\oplus \VVV(2\omega_{2m})$.

Since $\bc[\Y]^U$ is a polynomial ring, the grading and the weights imply that the elements $f_0,\ldots,f_m$
are algebraically independent. In addition, if these elements do not generate the ring, then
necessarily $\dim \bc[\Y]^U>m+1$.  So Proposition~\ref{uinvariantso}
is a consequence of the following lemma, which is proved along the same lines
as Lemma~\ref{dimensiospuinvariant}:
\qed
\begin{lem}
$\dim \bc[\Y]^U=m+1$.
\end{lem}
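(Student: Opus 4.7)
The plan is to mimic the proof of Lemma~\ref{dimensiospuinvariant} almost verbatim, adjusting only the numerical inputs to type $D$. First I would invoke Rosenlicht's theorem: since $\bc[\Y]$ is a UFD (as the affine cone over a partial flag variety of the simply connected semisimple group $Spin_{4m}$) and $U$ is connected with no nontrivial characters, $\bc(\Y)^U$ is the fraction field of $\bc[\Y]^U$, so
\[
\dim\bc[\Y]^U=\trdg_\bc\bc(\Y)^U=\dim\Y-d,
\]
where $d$ is the dimension of a generic $U$-orbit on $\Y$.

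Next I would show that $U$ operates freely on an open subset of $\Y$, which forces $d=\dim U=2|\Phi^+(D_m)|=2m(m-1)$. The key input is the local structure theorem from the preceding spherical proposition: near the identity coset, an open subset of $Y$ is described by the $L$-module $N\simeq\bc^m\otimes\bc^m$ with $L=GL_m\times GL_m$. A maximal unipotent subgroup of $L$ acts freely on a dense open subset of $\bc^m\otimes\bc^m$ (for instance at the identity matrix, any stabilizing element would lie in the intersection of upper- and lower-triangular unipotent matrices, which is trivial); combined with the translation part supplied by the local structure theorem, this lifts to a free $U$-action on an open subset of $Y$ and hence of $\Y$.

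Finally I would compute $\dim\Y$. Since $\omega_{2m}$ is a cominuscule weight of $\mso_{4m}$, the nilpotent radical $\fn$ is abelian, so $\dim Y=\dim\fn=\dim\Lambda^2\bc^{2m}=m(2m-1)$ and $\dim\Y=m(2m-1)+1$. Combining these pieces gives
\[
\dim\bc[\Y]^U=\bigl(m(2m-1)+1\bigr)-2m(m-1)=m+1,
\]
as required. The only real obstacle is verifying the free $U$-action cleanly; the rest is numerical bookkeeping. Notice that the spherical $L$-representation $\bc^m\otimes\bc^m$ is identical to the one appearing in the symplectic case, which is why the argument transports with virtually no change.
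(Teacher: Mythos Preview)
Your proposal is correct and follows exactly the approach the paper intends: the paper merely states that the lemma ``is proved along the same lines as Lemma~\ref{dimensiospuinvariant}'', and you have faithfully carried out those lines with the type~$D$ numerics. In particular, your observation that the slice representation $N\simeq\bc^m\otimes\bc^m$ of $L=GL_m\times GL_m$ is literally the same as in the symplectic case is precisely why the free-$U$-action step transfers without modification, and your dimension count $\bigl(m(2m-1)+1\bigr)-2m(m-1)=m+1$ is the correct orthogonal analogue.
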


\subsection{The dual realization}\label{dual}
Recall that for odd $m$ not all representations of $\mso_{2m}$
are self-dual. Therefore, Corollary \ref{zerlegspso} does not allow
to compare Zhu's algebras and $C_2$-algebras. In fact, to handle this problem, we
consider the weight $\omega_{2m-1}$ of $\mso_{4m}$ (instead of of $\omega_{2m}$).
This weight is also cominuscule and thus everything works for $\omega_{2m-1}$
as well. Below we formulate the analogues of Proposition \ref{uinvariantso}
and Corollary \ref{zerlegspso}.

Fix an odd $m$. Let $P_{2m-1}\subset Spin_{4m}$ be the maximal parabolic subgroup
associated to the weight $\omega_{2m-1}$ and set $Y'=Spin_{4m}/P_{2m-1}\subset{\mathbb{P}(\VVV(k\omega_{2m-1}))}$.
We denote by $\Y'\subset \VVV(k\omega_{2m-1})$ the affine cone over the projective variety.

%The group $Spin_{4m}$ acts on the affine variety $\Y$ and hence on its coordinate ring $\bc[\Y]$. As
%$Spin_{4m}$-module, this ring is the direct sum:
%$$
%\bc[\Y]=\bigoplus_{\ell\in\bn} \VVV(\ell\omega_{2m}).
%$$
%The ring $\bc[\Y]$ is naturally graded with $\VVV(\ell\omega_{2m})$ as
%$\ell$-th graded component.

%Let $U\subset G=Spin_{2m}\times Spin_{2m}$ be the unipotent radical
%of a Borel subgroup, in a representation of $G$ the $U$-fixed vectors are sums
%of highest weight vectors. The ring $\bc[\Y]^U$ of $U$-invariant vectors
%completely determines the structure of $\bc[\Y]$ as $G$-representation.

\begin{prop}
The ring of $U$-invariant functions
$$
\bc[\Y']^U=\bigoplus_{k\ge 0} \VVV(k\omega_{2m-1})^U
$$
is a polynomial ring generated by its degree 1 elements of weight
$\omega_m\otimes\omega_{m-1}$ and $\omega_{m-1}\otimes\omega_m$
and the degree 2 elements $\om_i\otimes \om_i$ for $i=0,\ldots,m-2$,
where $\om_0$ denotes the trivial weight.
\end{prop}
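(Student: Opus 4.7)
The plan is to mirror the proof of Proposition~\ref{uinvariantso} step by step, substituting the cominuscule weight $\omega_{2m-1}$ for $\omega_{2m}$ throughout. Since $\omega_{2m-1}$ is also cominuscule for $\mso_{4m}$, the nilpotent radical of $P_{2m-1}$ is abelian, so the structural tools used before (factoriality of the coordinate ring of the affine cone, the local structure theorem of Brion--Luna--Vust, Luna's slice theorem, and Rosenlicht's theorem) all apply without modification.

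First, I would show that the action of $G=Spin_{2m}\times Spin_{2m}$ on $Y'=Spin_{4m}/P_{2m-1}$ is spherical. Apply the local structure theorem at the class $y=\overline{1}\in Y'$. Since $m$ is odd and the two half-spin nodes $\alpha_{2m-1}$, $\alpha_{2m}$ of $D_{2m}$ get paired with the half-spin nodes of the two $D_m$-factors in dual fashion, the stabilizer $G_y$ is now $P_{m-1}\times P_{m}$ (up to the ordering of the two factors). As in the previous case, the Levi of an opposite parabolic is $GL_m\times GL_m$ and the normal module at $y$ is still $\bc^m\otimes\bc^m$, which carries a spherical action of $GL_m\times GL_m$. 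Sphericality of $G$ on $Y'$ then follows from Luna's slice theorem, and since $\bc[\Y']$ is a UFD on which $U$ has no nontrivial character, $\bc[\Y']^U$ is a polynomial ring.

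Second, I would pin down the generators by computing the low-degree decomposition of $\res_{\mso_{2m}\oplus\mso_{2m}}^{\mso_{4m}}\VVV(\omega_{2m-1})$ and $\res_{\mso_{2m}\oplus\mso_{2m}}^{\mso_{4m}}\VVV(2\omega_{2m-1})$. The starting point is
\[
\Lambda^{2m}\bc^{4m}=\Lambda^{2m}(\bc^{2m}\oplus\bc^{2m})=\VVV(2\om_{2m})\oplus\VVV(2\om_{2m-1}),
\]
together with the fundamental restriction
$\res\VVV(\omega_{2m-1})=V(\omega_{m-1})\otimes V(\omega_m)\oplus V(\omega_m)\otimes V(\omega_{m-1})$,
which yields two highest weight lines of weights $\omega_m\otimes\omega_{m-1}$ and $\omega_{m-1}\otimes\omega_m$ in degree $1$. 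Subtracting the contribution of $\Lambda^{2m-2}\bc^{4m}$ and distinguishing the two summands of $\Lambda^{2m}\bc^{4m}$ by matching highest weights along the embedding $\mso_{2m}\oplus\mso_{2m}\hookrightarrow\mso_{4m}$, one extracts exactly the trivial summand and the summands $V(\omega_i)\otimes V(\omega_i)$ for $i=1,\dots,m-2$ as the new highest weight vectors in degree $2$. Calling these $f_0,\dots,f_{m-2}$ and the two degree-$1$ vectors $f_{m-1},f_m$, the degree-$2$ highest weight vectors in $\VVV(2\om_{2m-1})$ that are \emph{not} among $f_0,\dots,f_{m-2}$ must be accounted for by the products $f_{m-1}^2,\,f_{m-1}f_m,\,f_m^2$, whose weights $2\om_m\otimes 2\om_{m-1},\ (\om_{m-1}+\om_m)\otimes(\om_{m-1}+\om_m),\ 2\om_{m-1}\otimes 2\om_m$ will match precisely the remaining half-spin pieces in $\VVV(2\om_{2m-1})$.

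Finally, algebraic independence of $f_0,\dots,f_m$ is immediate from the weights and grading in the polynomial ring $\bc[\Y']^U$, and the proposition reduces to the identity $\dim\bc[\Y']^U=m+1$. This last equality is proved exactly as Lemma~\ref{dimensiospuinvariant}: the local structure theorem provides an open set on which $U$ acts freely, so by Rosenlicht's theorem the transcendence degree of $\bc(\Y')^U$ equals $\dim\Y'-\dim G/U=m+1$. The main obstacle is the bookkeeping in the second step, namely correctly identifying which of the two $D_{2m}$-summands of $\Lambda^{2m}\bc^{4m}$ is $\VVV(2\om_{2m-1})$ after the restriction to $\mso_{2m}\oplus\mso_{2m}$, and verifying that for odd $m$ the two half-spin factors are genuinely swapped as claimed; everything else is a direct transcription of the even-$m$ argument.
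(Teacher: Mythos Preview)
Your proposal is correct and follows precisely the approach the paper intends: the paper does not give a separate proof here but says only that ``everything works for $\omega_{2m-1}$ as well,'' and you have filled in exactly the analogous steps (sphericality via the local structure theorem with stabilizer $P_m\times P_{m-1}$ and normal slice $\bc^m\otimes\bc^m$, polynomiality of $\bc[\Y']^U$ from the UFD property, the low-degree restriction of $\VVV(\omega_{2m-1})$ and $\VVV(2\omega_{2m-1})$, and the Krull dimension count). One small slip: in the last paragraph the codimension of a generic free $U$-orbit is $\dim\Y'-\dim U=m+1$, not $\dim\Y'-\dim G/U$.
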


As an immediate consequence we get:
\begin{cor}
$$
\res_{Spin_{2m}\times Spin_{2m}}^{Spin_{4m}} \VVV(k\omega_{2m-1})=
\bigoplus_{\substack{\la=\sum_{i=1}^m a_i\om_i\\ (\sum_{i=1}^{m-2} 2a_i)+a_{m-1}+a_m\le k \\
k-a_{m-1}-a_m\equiv 0\pmod 2}} V(\la)\otimes V(\la)^*.
$$
\end{cor}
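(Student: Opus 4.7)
The plan is to read off both sides of the claimed decomposition directly from the preceding Proposition. Since the coordinate ring $\bc[\Y']$ decomposes as a $Spin_{4m}$-module into $\bigoplus_{\ell\ge 0}\VVV(\ell\om_{2m-1})$, with $\VVV(\ell\om_{2m-1})$ sitting in degree $\ell$, the space $\VVV(k\om_{2m-1})^U$ is exactly the degree-$k$ component of the polynomial ring $\bc[\Y']^U$. The multiplicity of a $G=Spin_{2m}\times Spin_{2m}$-irreducible $V(\mu)\otimes V(\nu)$ in $\res^{Spin_{4m}}_{G}\VVV(k\om_{2m-1})$ equals the dimension of the $(\mu,\nu)$-weight space in this degree-$k$ part, so the task reduces to enumerating highest-weight monomials in the prescribed generators.

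First I would write down monomials in the generators supplied by the Proposition: degree-$2$ generators $f_0,f_1,\ldots,f_{m-2}$ of weights $\om_0\otimes\om_0,\om_1\otimes\om_1,\ldots,\om_{m-2}\otimes\om_{m-2}$ (with $\om_0$ trivial), together with the two degree-$1$ generators $g_{m-1}$ of weight $\om_m\otimes\om_{m-1}$ and $g_m$ of weight $\om_{m-1}\otimes\om_m$. Algebraic independence implies that
$$f_0^{b_0}f_1^{b_1}\cdots f_{m-2}^{b_{m-2}}g_{m-1}^{c_{m-1}}g_m^{c_m}$$
is a highest-weight vector of degree $2(b_0+b_1+\cdots+b_{m-2})+c_{m-1}+c_m$ and of weight $\sum_{i=1}^{m-2}b_i(\om_i\otimes\om_i)+c_{m-1}(\om_m\otimes\om_{m-1})+c_m(\om_{m-1}\otimes\om_m)$, and that these monomials form a basis of the $U$-invariants as the exponent tuple varies.

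Next I would translate these weights into the indexing of the corollary using the duality $V(\om_{m-1})^{*}\simeq V(\om_m)$, which holds for odd $m$. Setting $a_i=b_i$ for $i=1,\ldots,m-2$, $a_{m-1}=c_m$ and $a_m=c_{m-1}$, the weight on the first tensor factor becomes $\la=\sum_{i=1}^m a_i\om_i$, while on the second it is $\sum_{i=1}^{m-2}a_i\om_i+a_m\om_{m-1}+a_{m-1}\om_m$, which is precisely the highest weight of $V(\la)^{*}$ under the stated duality. The degree condition $2b_0+2\sum_{i=1}^{m-2}a_i+a_{m-1}+a_m=k$ with $b_0\ge 0$ is equivalent to the two conditions $\bigl(\sum_{i=1}^{m-2}2a_i\bigr)+a_{m-1}+a_m\le k$ and $k-a_{m-1}-a_m\equiv 0\pmod 2$; moreover each admissible $\la$ is attained by the unique choice $b_0=\tfrac12\bigl(k-\sum 2a_i-a_{m-1}-a_m\bigr)$, yielding multiplicity one and matching the indexing of the claimed direct sum exactly.

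The real substance is contained in the Proposition (sphericity, the UFD argument, and the identification of the generators and their weights). The only subtlety in this reduction is bookkeeping the $\om_{m-1}\leftrightarrow\om_m$ swap on the second tensor factor: this swap is forced by the fact that the degree-$1$ generators carry the mixed weights $\om_m\otimes\om_{m-1}$ and $\om_{m-1}\otimes\om_m$ rather than the diagonal weights appearing in the even case, and it is precisely what produces $V(\la)^{*}$ in place of $V(\la)$. This is also the structural reason why, for odd $m$, one must pass to $\om_{2m-1}$ instead of $\om_{2m}$ in order for the restriction to reproduce the pairing $V(\la)\otimes V(\la)^{*}$ needed to compare with Zhu's algebra.
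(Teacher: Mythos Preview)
Your argument is correct and is exactly the expansion of what the paper means by ``as an immediate consequence'': you read off the degree-$k$ monomials in the polynomial ring $\bc[\Y']^U$ described in the Proposition, and the $\om_{m-1}\leftrightarrow\om_m$ swap on the second tensor factor induced by the mixed degree-$1$ generator weights is precisely what yields $V(\la)\otimes V(\la)^*$ for odd $m$. The bookkeeping with $b_0$ giving the inequality and the parity condition is the entire content of the deduction.
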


\subsection{Comparison}
Summarizing, we obtain the following:
\begin{prop}
We have an isomorphism of $\mso_{2m}$ modules
\begin{equation}\label{quot}
S^\bullet(\mso_{2m})/\langle \U(\gl_{2m})\circ e_\theta^{k+1} \rangle\simeq
\bigoplus_{\substack{\la=\sum_{i=1}^m a_i\om_i\\
(\sum_{i=1}^{m-2} 2a_i)+a_{m-1}+a_m\le k \\
k-a_{m-1}-a_m\equiv 0\pmod 2}} V(\la)\otimes V(\la)^*,
\end{equation}
providing a surjection of $A_{[2]}(\mso_{2m};k)$ to the right hand side
of \eqref{quot}.
\end{prop}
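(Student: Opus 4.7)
The plan is to assemble the final Proposition directly from the structural results proved earlier in this section, treating the two parities of $m$ separately. The surjection assertion is immediate: by the inclusion of ideals displayed in \eqref{sosamespace}, $\langle \U(\mso_{2m})\circ e_\theta^{k+1}\rangle \subset \langle \U(\mgl_{2m})\circ e_\theta^{k+1}\rangle$, so passing to the quotient by the larger ideal produces a canonical surjection from $A_{[2]}(\mso_{2m};k)$ onto the left hand side of \eqref{quot}. The real content is therefore the identification of that quotient as the asserted $\mso_{2m}$-module.

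For even $m$ I would first invoke Lemma \ref{fnisomorphismso} to identify $S^\bullet(\mso_{2m})$ with $S^\bullet(\fn^-)$ as a $\mgl_{2m}$-module, and Lemma \ref{rootequalityso} to identify the highest weight vector $e_\theta^{k+1}$ with $X_{-\alpha_{2m}}^{k+1}$. Lemma \ref{representglso} then realizes the quotient as $\VVV(k\om_{2m})\otimes \bc_{-k\om_{2m}}$. Restricting along the embedding \eqref{soembedd}, the twist $\bc_{-k\om_{2m}}$ is a $\gl_{2m}$-character that vanishes on $\mso_{2m}$, and Lemma \ref{charactersameso} together with Corollary \ref{zerlegspso} produce the decomposition $\bigoplus V(\la)\otimes V(\la)$ with $\la$ ranging over the index set of \eqref{quot}. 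Because all irreducible representations of $\mso_{2m}$ are self-dual when $m$ is even, each summand $V(\la)\otimes V(\la)$ coincides with $V(\la)\otimes V(\la)^*$, giving the stated form.

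For odd $m$ this argument fails at the last step, since not every $V(\la)$ is self-dual. Following Remark \ref{dualrem} and subsection \ref{dual}, I would replace the cominuscule weight $\om_{2m}$ of $\mso_{4m}$ by $\om_{2m-1}$ and repeat the construction through the parabolic $P_{2m-1}$ instead of $P_{2m}$. This requires the $\om_{2m-1}$-analogue of Lemma \ref{representglso}, which goes through unchanged because $\om_{2m-1}$ is also cominuscule and the nilradical of $P_{2m-1}$ is again isomorphic to $\Lambda^2(\bc^{2m})^*$ as a $\mgl_{2m}$-module. The corollary of subsection \ref{dual} then supplies the decomposition directly in the desired form $\bigoplus V(\la)\otimes V(\la)^*$, with the same index set.

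The main obstacle I anticipate lies in the odd-$m$ case: identifying the $\mgl_{2m}$-highest weight vector $e_\theta^{k+1}$ with the appropriate negative simple root vector of $\mso_{4m}$ under the $P_{2m-1}$-realization is not a verbatim copy of Lemma \ref{rootequalityso}, and one must track carefully which simple root of $\mso_{4m}$ restricts to the highest root $\theta$ of the diagonally embedded $\mso_{2m}$, as well as verify that the resulting ideal coincides with $\langle \U(\mgl_{2m})\circ e_\theta^{k+1}\rangle$. Once this bookkeeping is settled, the remaining arguments are strictly parallel to the even case, and combining the two parities yields the Proposition.
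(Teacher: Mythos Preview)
Your proposal is correct and matches the paper's approach: the paper presents this Proposition with the single word ``Summarizing'' in lieu of a proof, and your assembly of Lemma~\ref{representglso}, Lemma~\ref{charactersameso}, and Corollary~\ref{zerlegspso} (together with self-duality) for even $m$, and of their $\om_{2m-1}$-analogues from subsection~\ref{dual} for odd $m$, is exactly the intended argument. The bookkeeping concern you flag in the odd case---tracking the analogue of Lemma~\ref{rootequalityso} and of Lemma~\ref{charactersameso} for the Levi of $P_{2m-1}$---is genuine but routine, and the paper likewise leaves it implicit.
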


\subsection{The odd orthogonal case.}
In this subsection we consider the case $\g=\mso_{2m+1}$. All the constructions
as above are valid in this case as well, so we only formulate the final result in the following
proposition.

\begin{prop}
There exists a surjection of $\mso_{2m+1}$ modules:
\[
A_{[2]}(\mso_{2m+1};k)\to \bigoplus_{\substack{\la=\sum_{i=1}^m a_i\om_i\\
(\sum_{i=1}^{m-1} 2a_i)+a_m\le k \\
k-a_m\equiv 0\pmod 2}} V(\la)\otimes V(\la)^*.
\]
\end{prop}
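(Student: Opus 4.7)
The plan is to mirror the construction of the even orthogonal case, replacing the ambient algebra $\mso_{4m}$ by $\mso_{4m+2}$ and the cominuscule weight $\omega_{2m}$ by $\omega_{2m+1}$. First I would embed $\mso_{2m+1} \hookrightarrow \mgl_{2m+1} \hookrightarrow \mso_{4m+2}$ so that the Levi decomposition of the maximal parabolic $\fp_{2m+1} \subset \mso_{4m+2}$ associated to $\omega_{2m+1}$ has $\fl \simeq \mgl_{2m+1}$ and abelian nilpotent radical $\fn^- \simeq \Lambda^2(\bc^{2m+1})^*$, whose restriction to $\mso_{2m+1}$ is the adjoint representation. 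Then, exactly as in Lemmas~\ref{fnisomorphismso}, \ref{rootequalityso}, and \ref{representglso}, the inclusion $U(\mso_{2m+1}) \circ x_\theta^{k+1} \subset U(\mgl_{2m+1}) \circ x_\theta^{k+1}$ produces a surjection of $\mso_{2m+1}$-modules
\[
A_{[2]}(\mso_{2m+1};k) \twoheadrightarrow \VVV(k\omega_{2m+1}) \otimes \bc_{-k\omega_{2m+1}},
\]
where $\VVV$ denotes an irreducible $\mathrm{Spin}_{4m+2}$-representation.

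Next I would reduce the description of the right hand side as an $\mso_{2m+1}$-module to a $\mso_{2m+1} \oplus \mso_{2m+1}$-decomposition problem, by using the maximal semisimple subalgebra $\mso_{2m+1} \oplus \mso_{2m+1} \subset \mso_{4m+2}$ (of type $B_m + B_m$ inside $D_{2m+1}$) together with the analogue of Lemma~\ref{charactersameso} identifying the restriction through the Levi with the restriction to the diagonal $\Delta(\mso_{2m+1})$. The required $\mso_{2m+1} \oplus \mso_{2m+1}$-decomposition of $\VVV(k\omega_{2m+1})$ is then obtained geometrically. Setting $Y = \mathrm{Spin}_{4m+2}/P_{2m+1}$ and letting $\Y$ be the affine cone, I would verify via the local structure theorem that the action of $\mathrm{Spin}_{2m+1} \times \mathrm{Spin}_{2m+1}$ on $Y$ is spherical, the normal slice at the base point being an appropriate spherical Levi-module. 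Since $\bc[\Y]$ is a UFD, $\bc[\Y]^U$ is then a polynomial ring, and a Rosenlicht argument identical to Lemma~\ref{dimensiospuinvariant} gives $\dim \bc[\Y]^U = m+1$ (consistent with $\dim \Y - \dim U = m(2m+1) + 1 - 2m^2 = m+1$).

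Finally I would pin down the generators of $\bc[\Y]^U$. A character computation, using the exterior-power description of the half-spin representation $\VVV(\omega_{2m+1})$ and a dimension match ($2^m \cdot 2^m = 2^{2m}$), should give $\res\VVV(\omega_{2m+1}) = V(\omega_m) \otimes V(\omega_m)$, and analyzing $\VVV(2\omega_{2m+1})$ should produce highest weight vectors of weight $\omega_i \otimes \omega_i$ for $i = 0, \ldots, m-1$. This yields one degree-one generator of weight $\omega_m \otimes \omega_m$ and $m$ degree-two generators of weights $\omega_i \otimes \omega_i$ for $i = 0, \ldots, m-1$, matching the Krull dimension. Counting monomials of total degree $k$ in these $m+1$ algebraically independent generators forces $a_m + 2\sum_{i=1}^{m-1} a_i + 2a_0 = k$, equivalently $\sum_{i=1}^{m-1} 2 a_i + a_m \le k$ and $k - a_m \equiv 0 \pmod 2$. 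Composing with the surjection of the first step gives the claimed surjection; self-duality of all $\mso_{2m+1}$-representations makes $V(\la)^* = V(\la)$, so no separate ``dual realization'' as in subsection \ref{dual} is required. The main obstacle I anticipate is the explicit determination of $\VVV(\omega_{2m+1})$ and $\VVV(2\omega_{2m+1})$ as $\mso_{2m+1} \oplus \mso_{2m+1}$-modules, since for type $D_{2m+1}$ the weight $\omega_{2m+1}$ labels a half-spin representation and the clean exterior-power identity~\eqref{2m} of the symplectic case is not directly available.
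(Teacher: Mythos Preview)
Your proposal is correct and follows precisely the approach the paper intends: the paper's own ``proof'' of this proposition consists solely of the sentence ``All the constructions as above are valid in this case as well,'' and what you have written is exactly the odd-orthogonal transcription of those constructions (ambient algebra $\mso_{4m+2}$, cominuscule weight $\omega_{2m+1}$, Levi $\mgl_{2m+1}$, nilradical $\Lambda^2(\bc^{2m+1})^*$, sphericity via the local structure theorem, and the polynomial ring of $U$-invariants with the indicated generator degrees and weights). Your Krull-dimension count and the resulting inequalities match the statement, and your observation that self-duality of $\mso_{2m+1}$-modules obviates the analogue of subsection~\ref{dual} is correct.
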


%Now comes what I know and where at the moment I don't know how to proceed to prove what we want.
%We want:
%$$
%A_{[2]}(k)=\bigoplus_{\substack{\la=\sum_{i=1}^m a_i\om_i\\ a_1+(\sum_{i=2}^{m-2} 2a_i)+a_{m-1}+a_m\le k}}
%V(\la)\otimes V(\la).
%$$
%and we have so far only the following part: There is an ideal $I\subset A_{[2]}(k)$ such that
%$$
%A_{[2]}(k)/I=
%\bigoplus_{\substack{\la=\sum_{i=1}^m a_i\om_i\\ (\sum_{i=1}^{m-2} 2a_i)+a_{m-1}+a_m\le k \\
%k-a_{m-1}-a_m\equiv 0\pmod 2}} V(\la)\otimes V(\la).
%$$
%%Everything would be nice if the trivial and the $\om_1\otimes\om_1$ representation would have shown
%up in $\VVV(\om_{2m})$ instead of only in $\VVV(2\om_{2m})$, but the reason for this is of course
%that in the construction via the abelian radical the ideal to describe the representation
%is bigger than the ideal for the definition of $A_{[2]}(k)$.

%So let's try to understand how much "bigger" the generating set for the ideal for the representation
%really is.

%\begin{lem}
%Denote by $\om_0$ the trivial weight.
%The irreducible $\mgl_{2m}$-module $U(\mgl_{2m})\circ x_\theta^{k+1}$
%decomposes as $\mso_{2m}$-module into
%$$
%U(\mgl_{2m})\circ x_\theta^{k+1}=
%\bigoplus_{\substack{\la=2a_1\om_1+a_2\om_2\\ 2a_1 +a_2\le k+1 \\ k+1-2a_1 -a_2\equiv 0 \bmod 2 }}
%V(\la)
%%$$
%The ring of $U$-invariants is generated by three elements, one of degree 1 and weight $\om_2$,
%and two of degree 2, of which one is an invariant and the other is of weight $2\om_1$.
%\end{lem}
%Proof is similar to the proofs before.

\section*{Acknowledgements}
We are grateful to Terry Gannon for useful correspondence.
The work of EF was partially supported
by the Russian President Grant MK-281.2009.1, the RFBR Grants 09-01-00058, 07-02-00799
and NSh-3472.2008.2, by Pierre Deligne fund
based on his 2004 Balzan prize in mathematics and by Alexander von
Humboldt Fellowship.
The work of Peter Littelmann was partially supported by the
priority program SPP 1377 of the German Science Foundation.

\end{document}